\renewcommand{\geq}{\geqslant}
\renewcommand{\leq}{\leqslant}
\renewcommand{\l}{\langle}
\renewcommand{\r}{\rangle}
\renewcommand{\L}{\mathcal{L}}
\newcommand{\M}{\mathcal{M}}
\renewcommand{\k}{\kappa}
\newtheorem{theorem}{Theorem}[section]
\newtheorem{lemma}[theorem]{Lemma}
\newtheorem{proposition}[theorem]{Proposition}
\newtheorem{corollary}[theorem]{Corollary}
\newtheorem*{main-theorem}{Main Theorem}
\newtheorem*{remark*}{Remark}
\numberwithin{equation}{section}
\title[Modulational instability in the Whitham equation]
{Modulational instability in\\the Whitham equation for water waves}
\author[Hur]{Vera~Mikyoung~Hur}
\address{Department of Mathematics, University of Illinois at Urbana-Champaign, Urbana, IL 61801 USA}
\email{verahur@math.uiuc.edu}
\author[Johnson]{Mathew~A.~Johnson}
\address{Department of Mathematics, University of Kansas, Lawrence, KS 66045 USA} 
\email{matjohn@math.ku.edu}  
\date{\today}
\begin{document}

\maketitle

\begin{abstract}
We show that periodic traveling waves with sufficiently small amplitudes of the Whitham equation, 
which incorporates the dispersion relation of surface water waves 
and the nonlinearity of the shallow water equations,
are spectrally unstable to long wavelengths perturbations 
if the wave number is greater than a critical value, 
bearing out the Benjamin-Feir instability of Stokes waves; 
they are spectrally stable to square integrable perturbations otherwise. 
The proof involves a spectral perturbation of the associated linearized operator 
with respect to the Floquet exponent and the small amplitude parameter. 
We extend the result to related, nonlinear dispersive equations.
\end{abstract}

%%%%%%%%%%%%%%%%%%%%%%%%%%%%%%%%%%%%%%%%%%%%%%%%%
%%%%%%%%%%%%%%%%%%%%%%%%%%%%%%%%%%%%%%%%%%%%%%%%%
%%%%%%%%%%%%%%%%%%%%%%%%%%%%%%%%%%%%%%%%%%%%%%%%%
\section{Introduction}\label{sec:intro}
%%%%%%%%%%%%%%%%%%%%%%%%%%%%%%%%%%%%%%%%%%%%%%%%%
%%%%%%%%%%%%%%%%%%%%%%%%%%%%%%%%%%%%%%%%%%%%%%%%%
%%%%%%%%%%%%%%%%%%%%%%%%%%%%%%%%%%%%%%%%%%%%%%%%%

We study the stability and instability of periodic traveling waves of the {\em Whitham equation} 
\begin{equation}\label{E:whitham1}
\partial_tu+\M \partial_xu+\frac32\sqrt{\frac{g}{d}}u\partial_xu=0,
\end{equation}
which was put forward in \cite[p. 477]{Whitham} to model the unidirectional propagation of 
surface water waves with small amplitudes, but not necessarily long wavelengths, in a channel. 
Here $t\in\mathbb{R}$ denotes the temporal variable, 
$x\in\mathbb{R}$ is the spatial variable in the predominant direction of wave propagation,
and $u=u(x,t)$ is real valued, describing the fluid surface;
$g$ denotes the constant due to gravitational acceleration and $d$ is the undisturbed fluid depth. 
Moreover $\M$ is a Fourier multiplier, defined via its symbol as 
\begin{equation}\label{def:M1}
\widehat{\M f}(\xi)=\sqrt{\frac{g\tanh(d\xi)}{\xi}}\widehat{f}(\xi)=:\alpha(\xi)\widehat{f}(\xi).
\end{equation}
Note that $\alpha(\xi)$ is the phase speed of a plane wave with the spatial frequency $\xi$ 
near the quintessential state of water; see \cite{Whitham}, for instance.
 
\

When waves are long compared to the fluid depth so that $d\xi\ll1$, 
one may expand the symbol in \eqref{def:M1} and write that 
\begin{equation}\label{def:aKdV}
\alpha(\xi)=\sqrt{gd}\Big(1-\frac16d^2\xi^2\Big)+O(d^4\xi^4)=:\alpha_{KdV}(\xi)+O(d^4\xi^4),
\end{equation}
where $\alpha_{KdV}$ gives the dispersion relation of the Korteweg-de Vries (KdV) equation, 
without normalization of parameters,
\begin{equation}\label{E:KdV}
\partial_tu+\sqrt{gd}\Big(1+\frac16d^2\partial_x^2\Big)\partial_xu
+\frac32\sqrt{\frac{g}{d}}u\partial_xu=0.
\end{equation}
Consequently one may regard the KdV equation as to approximate up to ``second" order 
the dispersion of the Whitham equation, and hence the water wave problem, 
in the long wavelength regime. 
As a matter of fact, solutions of \eqref{E:KdV} and \eqref{E:whitham1} exist 
and they converge to those of the water wave problem at the order of $O(d^2\xi^2)$ 
during a relevant interval of time; see \cite[Section~7.4.5]{Lannes}, for instance. 

The KdV equation well explains long wave phenomena in a channel of water
--- most notably, solitary waves --- but it loses relevances\footnote{
A relative error $(\frac{\alpha-\alpha_{KdV}}{\alpha})(\xi)$ of, say, $10\%$ 
is made for $|d\xi|>1.242\dots$.} for short and intermediately long waves. 
Note in passing that a long wave propagating over an obstacle releases higher harmonics;
see \cite[Section~5.2.3]{Lannes}, for instance.
In particular, waves in shallow water at times develop a vertical slope or a multi-valued profile
whereas the KdV equation prevents singularity formation from solutions. 
Whitham therefore concocted \eqref{E:whitham1} as an alternative to the KdV equation, 
incorporating the full range of the dispersion of surface water waves  
(rather than a second order approximation)
and the nonlinearity which compels blowup in the shallow water equations. 
As a matter of fact, \eqref{E:whitham1} better approximates 
short-wavelength, periodic traveling waves in water than \eqref{E:KdV} does;
see \cite{BKN}, for instance.
Moreover Whitham advocated that \eqref{E:whitham1} would explain ``breaking" and ``peaking".
Wave breaking --- bounded solutions with unbounded derivatives --- 
in \eqref{E:whitham1} was analytically verified in \cite{CE-breaking}, for instance, 
while its cusped, periodic traveling wave was numerically supported in \cite{EK2}, for instance.
Benjamin, Bona and Mahony proposed in \cite{BBM} another model 
of water waves in a channel --- the BBM equation --- improving\footnote{
$(\frac{\alpha-\alpha_{BBM}}{\alpha})(\xi)>0.1$ for $|d\xi|> 1.768\dots$.}
the dispersive effects of the KdV equation 
by replacing $\alpha_{KdV}(\xi)$ in \eqref{E:KdV}-\eqref{def:aKdV} by 
\[
\alpha_{BBM}(\xi):=\frac{\sqrt{gd}}{1+\frac16d^2\xi^2}=\sqrt{gd}\Big(1-\frac16d^2\xi^2\Big)+O(d^4\xi^4)
\quad \text{for}\quad d\xi\ll 1.
\] 
But, unfortunately, it fails to capture breaking or peaking. 

\

Benjamin and Feir (see \cite{BF, Benjamin1967}) and, independently, 
Whitham (see \cite{Whitham1967}) formally argued that 
Stokes' periodic waves in water would be unstable, leading to sidebands growth, 
namely the {\em Benjamin-Feir} or {\em modulational} instability, if
\begin{equation}\label{E:BF}
d\k>1.363\dots, 
\end{equation} 
where $\k$ is the wave number, or equivalently, if the Froude number\footnote{
$\alpha(0)=\sqrt{gd}$ is the infinitesimal, long wave speed.}
$\alpha(d\k)/\alpha(0)$ is less than $0.8\dots$. 
A rigorous proof may be found in \cite{BM1995}, albeit in the finite depth case. 
Concerning short and intermediately long waves in a channel of water, 
the Benjamin-Feir instability may not manifest in long wavelength approximations, 
such as the KdV and BBM equations. As a matter of fact, their periodic traveling waves 
are spectrally stable; see \cite{BJ1,BD} and \cite{J3}, for instance.
On the other hand, Whitham's model of surface water waves is relevant to all wavelengths, 
and hence it may capture short waves' instabilities, which is the subject of investigation here. 
In particular, we shall derive a criterion governing spectral instability 
of small-amplitude, periodic traveling waves of \eqref{E:whitham1}.

%%%%%%%%%%%%%%%%%%%%%%%%%%%%%%%%%%%%%%%%%%%%%%%%%
%%%%%%%%%%%%%%%%%%%%%%%%%%%%%%%%%%%%%%%%%%%%%%%%%
\begin{theorem}[Modulational instability index]\label{thm:main}
A $2\pi/\k$-periodic traveling wave of \eqref{E:whitham1} with sufficiently small amplitude 
is spectrally unstable to long wavelengths perturbations if $\Gamma(d\k)<0$, where 
\begin{equation}\label{def:Gamma}
\Gamma(z):=3\alpha(z)-2\alpha(2z)-1+z\alpha'(z).
\end{equation}
%and $\alpha(\xi)=\sqrt{g\xi^{-1}\tanh(d\xi)}$.  % is in \eqref{def:M1}. 
It is spectrally stable to square integrable perturbations if $\Gamma(d\k)>0$. 
%Further, such solutions are spectrally stable to square integrable perturbations if $\Gamma(d\k)>~0$. 
\end{theorem}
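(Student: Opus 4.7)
The plan is a two-parameter spectral perturbation analysis in the wave amplitude $\varepsilon$ and the Bloch exponent $\xi$. First, a Lyapunov--Schmidt reduction near the trivial solution produces a smooth family of $2\pi/\k$-periodic traveling waves
\[
\phi_\varepsilon(x)=\varepsilon\cos(\k x)+\varepsilon^2\bigl(a_0+a_2\cos(2\k x)\bigr)+O(\varepsilon^3),\qquad c_\varepsilon=\alpha(\k)+\varepsilon^2c_2+O(\varepsilon^3),
\]
with $a_0,a_2,c_2$ rational in $\alpha(0),\alpha(\k),\alpha(2\k)$; nondegeneracy of the bifurcation follows from strict monotonicity of $\alpha$ on $(0,\infty)$. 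Writing a perturbation in the comoving frame as $e^{\lambda t}e^{i\xi x}w(x)$ with $w$ of period $2\pi/\k$, the linearization becomes $\L_\xi(\varepsilon)w=\lambda w$, where
\[
\L_\xi(\varepsilon):=-(\partial_x+i\xi)\Bigl[\M_\xi-c_\varepsilon+\tfrac{3}{2}\sqrt{g/d}\,\phi_\varepsilon\Bigr]
\]
and $\M_\xi$ has symbol $\alpha(n\k+\xi)$ on the mode $e^{in\k x}$. The $L^2(\mathbb{R})$-spectrum is the union of the spectra of $\L_\xi(\varepsilon)$ over $\xi\in[-\k/2,\k/2)$.

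At $(\xi,\varepsilon)=(0,0)$ the operator $\L_0(0)=-\partial_x(\M-\alpha(\k))$ is Fourier diagonal, with eigenvalue $-in\k(\alpha(n\k)-\alpha(\k))$ on $e^{in\k x}$; this vanishes precisely for $n=0,\pm 1$ and is bounded away from zero for $|n|\ge 2$, so $\lambda=0$ is an isolated triple eigenvalue. Kato's analytic perturbation theory then supplies a smooth three-dimensional spectral projection $P(\xi,\varepsilon)$ and a $3\times 3$ reduced matrix $B(\xi,\varepsilon):=P\L_\xi(\varepsilon)P|_{\mathrm{ran}\,P}$ whose eigenvalues track the three branches emerging from $0$. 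All instabilities for small $(\xi,\varepsilon)$ must come from the spectrum of $B$.

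The central calculation is the expansion of $B$ in powers of $\xi$ and $\varepsilon$. At $\varepsilon=0$ its entries are encoded in $\lambda_n(\xi,0)=-i(n\k+\xi)(\alpha(n\k+\xi)-\alpha(\k))$ for $n=0,\pm 1$, whose Taylor expansions contribute the $\alpha(0)$, $\alpha(\k)$, and $\k\alpha'(\k)$ dependencies appearing in $\Gamma$; at $\xi=0$, multiplication by $\tfrac{3}{2}\sqrt{g/d}\,\phi_\varepsilon$ generates off-diagonal couplings involving $a_2$ and $c_2$, injecting the $\alpha(2\k)$ dependence. The Hamiltonian structure $\L(\varepsilon)=\partial_x\mathcal{H}''(\phi_\varepsilon)$ constrains the characteristic polynomial of $B$ to have the form $\lambda^3+ip_1(\xi,\varepsilon)\lambda^2+p_2(\xi,\varepsilon)\lambda+ip_3(\xi,\varepsilon)$ with $p_j\in\mathbb{R}$, so that setting $\lambda=i\mu$ yields a real cubic; purely imaginary eigenvalues of $B$ are equivalent to a nonnegative discriminant. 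A direct computation should produce
\[
\Delta(\xi,\varepsilon)=c_0\,\xi^2\varepsilon^2\,\Gamma(d\k)+O\bigl(\xi^2\varepsilon^3+\xi^3\varepsilon^2\bigr),
\]
for some explicit positive constant $c_0$, yielding spectral instability whenever $\Gamma(d\k)<0$.

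When $\Gamma(d\k)>0$ the local analysis shows the three near-zero eigenvalues remain on $i\mathbb{R}$; to conclude global stability I would argue by continuation, noting that at $\varepsilon=0$ the entire spectrum of each $\L_\xi(0)$ lies on $i\mathbb{R}$ and that strict monotonicity of $\alpha$ makes $(0,0)$ the unique collision point of eigenvalue branches in $(\xi,\lambda)$-space, so for $|\varepsilon|$ small no eigenvalue can leave $i\mathbb{R}$ elsewhere. The hard part is the central algebraic computation: coordinating the Stokes coefficients $a_0,a_2,c_2$, the dispersive expansions of $\alpha(\k+\xi)$ and $\alpha(\xi)$, and the three-dimensional reduction so that the discriminant collapses exactly to the compact combination $\Gamma(d\k)=3\alpha(d\k)-2\alpha(2d\k)-1+d\k\,\alpha'(d\k)$, together with verifying strict positivity of the leading coefficient $c_0$; this clean cancellation is what makes the instability criterion so simple.
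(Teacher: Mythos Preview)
Your overall strategy---Bloch decomposition, reduction to a $3\times 3$ matrix on the generalized zero-eigenspace, and sign analysis of the discriminant of the resulting real cubic---is exactly the paper's. The gap is in the form you predict for the discriminant. Your claim $\Delta(\xi,\varepsilon)=c_0\,\xi^2\varepsilon^2\,\Gamma(d\k)+O(\xi^2\varepsilon^3+\xi^3\varepsilon^2)$ forces $\Delta(\xi,0)=0$, but at $\varepsilon=0$ the three eigenvalues $i\omega_{n,\tau}$ ($n=0,\pm 1$) are \emph{distinct} for $\tau\neq 0$, so the discriminant is strictly positive there. Concretely $\omega_{0,\tau}-\omega_{\pm 1,\tau}=O(\tau)$ while $\omega_{1,\tau}-\omega_{-1,\tau}=O(\tau^2)$, so the unscaled discriminant behaves like $\tau^8$. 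After the natural rescaling $\mu\mapsto -i\tau\lambda$ (equivalently, writing $c_j(\tau,a)=\tau^{3-j}d_j(\tau,a)$), the discriminant takes the form
\[
\Delta_{\tau,a}=\Delta_{\tau,0}+\Lambda(\k)\,a^2+O\bigl(a^2(a^2+\tau^2)\bigr),
\]
with $\Delta_{\tau,0}>0$ for $\tau\in(0,1/2]$, $\Delta_{\tau,0}\sim D_1\tau^2$ as $\tau\to 0$, and $\operatorname{sign}\Lambda(\k)=\operatorname{sign}\Gamma(\k)$ (the positive prefactor you anticipate is the product $(2\k\alpha'(\k)+\k^2\alpha''(\k))(\alpha(\k)-1+\k\alpha'(\k))^3/(\alpha(\k)-\alpha(2\k))$ once one checks these factors have fixed sign). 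Instability for $\Gamma<0$ then comes from choosing $\tau$ small \emph{relative to} $a$ so that the negative $a^2$ term beats the positive $\tau^2$ term; stability for $\Gamma>0$ is immediate near $\tau=0$ since both terms are nonnegative.

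A second, smaller point: your continuation argument for the rest of the spectrum (``$(0,0)$ is the unique collision point'') is not quite enough. The paper handles $\sigma_2(\L_{\tau,a})$ not by simplicity alone but by showing that the bilinear form $\langle\phi,\L_{\tau,a}\phi\rangle$ is uniformly positive definite on that spectral subspace, so all those eigenvalues have the same (positive) Krein signature and therefore cannot leave $i\mathbb{R}$ even if they were to collide. For $\sigma_1(\L_{\tau,a})$ at $\tau$ bounded away from zero, the three $\omega_{n,\tau}$ are uniformly separated, so simplicity persists for small $a$ and those eigenvalues stay imaginary; this covers the range where the discriminant expansion is not informative.
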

%%%%%%%%%%%%%%%%%%%%%%%%%%%%%%%%%%%%%%%%%%%%%%%%%
%%%%%%%%%%%%%%%%%%%%%%%%%%%%%%%%%%%%%%%%%%%%%%%%%

The {\em modulational instability index} $\Gamma(z)$ in \eqref{def:Gamma} 
is negative for $z>0$ sufficiently large and 
it is positive for $z>0$ sufficiently small (see \eqref{E:LambdaLimit}). 
Therefore a small-amplitude, $2\pi/\k$-periodic traveling wave of \eqref{E:whitham1} 
is modulationally unstable if $\k>0$ is sufficiently large whereas 
it is spectrally stable to square integrable perturbations if $\k>0$ is sufficiently small.
Moreover $\Gamma(z)$ takes at least one transversal root, 
corresponding to change in stability. 

Unfortunately it seems difficult to analytically study 
the sign of the modulational instability index in \eqref{def:Gamma} further. 
Nevertheless $\Gamma(z)$ is explicit, involving hyperbolic functions
(see \eqref{def:M1}), and hence it is amenable of numerical evaluation. 
The graph in Figure~\ref{F:GammaFig} represents $\Gamma(z)$ for $0<z<1.5$, 
from which it is evident that $\Gamma$ takes a unique root $z_c\approx 1.146$ 
over the interval $(0,1.5)$ and $\Gamma(z)>0$ for $0<z<z_c$. 
The graph in Figure~\ref{F:GammaInvertFig} represents $z\mapsto \Gamma(1.146z^{-1})$
for $0<z<1.5$, from which it is evident that $\Gamma(z)<0$ for $z>z_c$. 
Together we may determine the modulational instability versus spectral stability of 
small-amplitude, periodic traveling waves of \eqref{E:whitham1}.

\begin{figure}[h]
\includegraphics[scale=0.7]{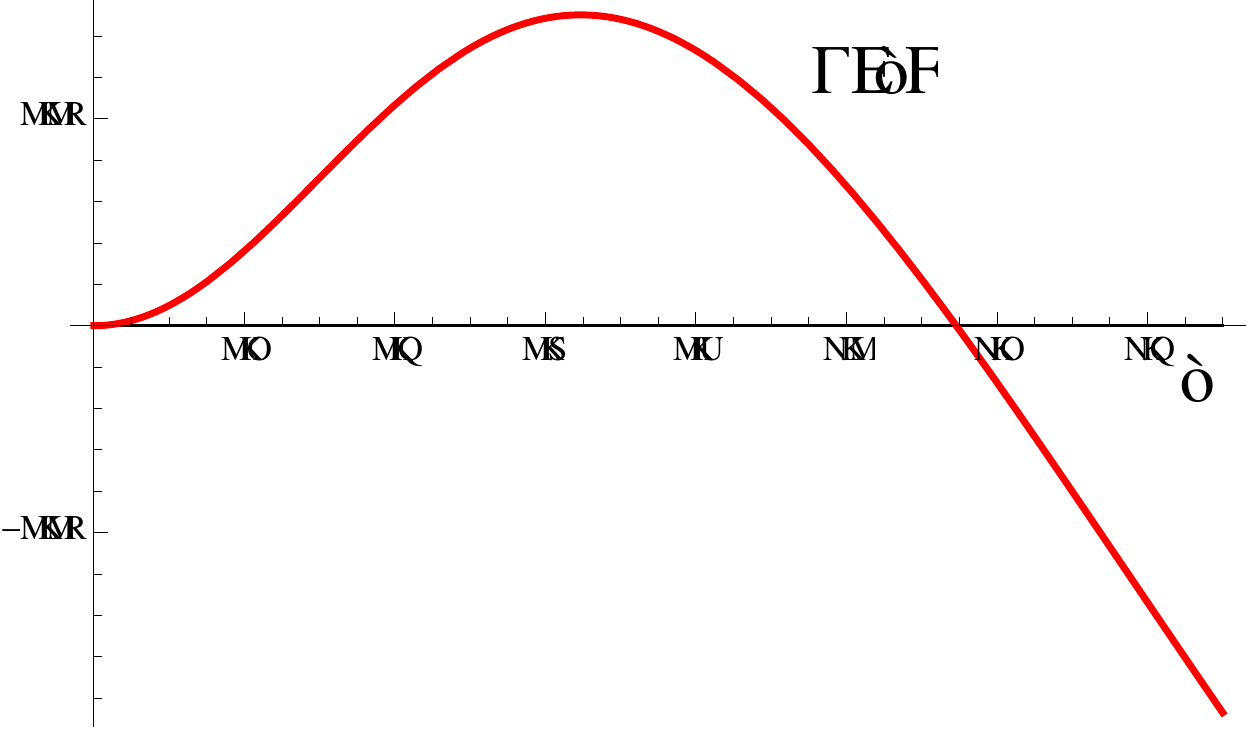}
\caption{The graph of $\Gamma(z)$. 
It is evident that $\Gamma$ takes one root $z_c$ over the interval $(0,1.5)$.
Zooming in upon the root reveals that $z_c\approx 1.146$.}
\label{F:GammaFig}\end{figure}

\begin{figure}[h]
\includegraphics[scale=0.7]{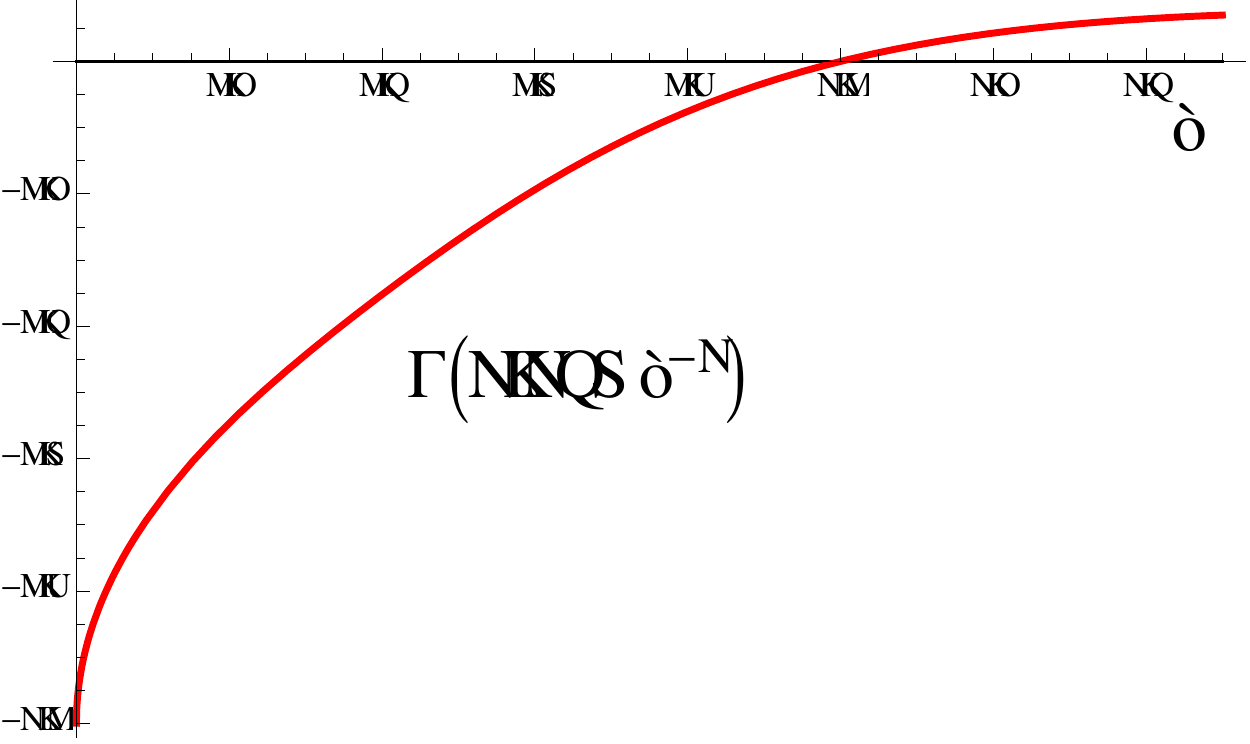}
\caption{The graph of $\Gamma(1.146z^{-1})$. 
It is evident that $\Gamma(z)<0$ for $z>z_c\approx1.146$.}
\label{F:GammaInvertFig}\end{figure}

%%%%%%%%%%%%%%%%%%%%%%%%%%%%%%%%%%%%%%%%%%%%%%%%%
%%%%%%%%%%%%%%%%%%%%%%%%%%%%%%%%%%%%%%%%%%%%%%%%%
\begin{corollary}[Modulational instability vs. spectral stability]\label{cor:main}
A $2\pi/\k$-periodic traveling wave of \eqref{E:whitham1} with sufficiently small amplitude
is modulationally unstable if the wave number is supercritical, i.e., if
\begin{equation}\label{E:BF-whitham}
d\k>z_c\approx1.146,
\end{equation}
where $z_c$ is the unique root of $\Gamma(z)$ for $0<z<\infty$. 
It is spectrally stable to square integrable perturbations if $0<d\k<z_c$. 
\end{corollary}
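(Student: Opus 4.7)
The plan is to reduce this corollary entirely to a sign analysis of the scalar function $\Gamma$ on $(0,\infty)$. Theorem \ref{thm:main} supplies the dictionary: spectral instability whenever $\Gamma(d\k)<0$ and spectral stability (to $L^2$ perturbations) whenever $\Gamma(d\k)>0$. Everything therefore comes down to locating the sign changes of $\Gamma$.

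First, I would pin down the endpoint behavior. Expanding $\alpha(z)=\sqrt{\tanh(z)/z}$ about $z=0$ yields $\alpha(z)=1-z^2/6+O(z^4)$, and substituting into \eqref{def:Gamma} gives
\[
\Gamma(z)=\tfrac{1}{2}z^2+O(z^4)\quad\text{as } z\to 0^{+},
\]
so $\Gamma(z)>0$ for all sufficiently small $z>0$. At the other end, $\tanh(z)\to 1$ forces both $\alpha(z)$ and $z\alpha'(z)$ to vanish as $z\to\infty$, and hence $\Gamma(z)\to -1$. These are the two limits recorded in \eqref{E:LambdaLimit}. The intermediate value theorem then guarantees at least one transversal root $z_c\in(0,\infty)$ of $\Gamma$.

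Second, to secure that this root is \emph{unique} and that $\Gamma$ is positive on $(0,z_c)$ and negative on $(z_c,\infty)$, I would invoke the numerical evaluation displayed in Figures \ref{F:GammaFig} and \ref{F:GammaInvertFig}. Figure \ref{F:GammaFig} plots $\Gamma$ on $(0,1.5)$ and exhibits a single transversal crossing at $z_c\approx 1.146$; Figure \ref{F:GammaInvertFig} plots $z\mapsto\Gamma(1.146\,z^{-1})$ on $(0,1.5)$, effectively covering the unbounded half-line $(z_c,\infty)$ and showing $\Gamma<0$ throughout it. Together with the two endpoint asymptotics, these plots exhaust $(0,\infty)$.

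Finally, feeding the sign information back through Theorem \ref{thm:main} delivers both halves of the corollary: $0<d\k<z_c$ gives $\Gamma(d\k)>0$, hence spectral stability to square integrable perturbations, while $d\k>z_c$ gives $\Gamma(d\k)<0$, hence modulational instability. The real obstacle here is not analytic but structural: as the authors themselves note, $\Gamma$ does not yield to a closed-form monotonicity or sign argument on the whole positive axis, so the uniqueness of $z_c$ must be verified by numerical inspection rather than by a slick inequality.
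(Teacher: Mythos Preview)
Your proposal is correct and follows essentially the same route as the paper: establish the endpoint asymptotics \eqref{E:LambdaLimit}, invoke the intermediate value theorem for existence of a root, appeal to the numerical evidence in Figures~\ref{F:GammaFig} and~\ref{F:GammaInvertFig} for uniqueness and the global sign pattern, and then read off the conclusion from Theorem~\ref{thm:main}. The only cosmetic difference is that the paper carries out the argument for the normalized equation \eqref{E:whitham} in the variable $\k$ and then un-normalizes via \eqref{E:normalization} to recover $d\k$, whereas you work directly with $z=d\k$; the content is identical.
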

%%%%%%%%%%%%%%%%%%%%%%%%%%%%%%%%%%%%%%%%%%%%%%%%%
%%%%%%%%%%%%%%%%%%%%%%%%%%%%%%%%%%%%%%%%%%%%%%%%%

Corollary~\ref{cor:main} qualitatively reproduces the Benjamin-Feir instability (see \eqref{E:BF})
in \cite{BF,Whitham1967} and \cite{BM1995} of Stokes waves in a channel of water;  
the critical Froude number corresponding to $d\k\approx 1.146$ is approximately $0.844\dots$ 
(To compare, the critical Froude number corresponding 
to $d\k\approx 1.363$ is approximately $0.802\dots$). 
We remark that the critical wave number (see \eqref{E:BF}) or the critical Froude number
for the Benjamin-Feir instability was obtained
in \cite{BF, Whitham1967} and \cite{BM1995} through an approximation of 
the numerical value of an appropriate index, called the dispersion equation.
It is therefore not surprising that the proof of Corollary~\ref{cor:main} ultimately relies upon 
a numerical evaluation of the modulational instability index. 

Corollary~\ref{cor:main} furthermore discloses that 
small-amplitude and long-wavelength, periodic traveling waves of \eqref{E:whitham1} 
are spectrally stable. To compare, its small-amplitude, solitary waves 
are conditionally and nonlinearly stable; see \cite{EGW}, for instance. 
Recent numerical studies in \cite{DO}, however, suggest that 
Stokes waves are subject to short wavelength instabilities, regardless of the wave number.  
Perhaps a bi-directional Whitham's model will capture such instabilities,
which is currently under investigation.

\

Recently an extensive body of work has aimed at translating formal modulation theories 
in \cite{BF,Whitham1967}, for instance, into rigorous mathematical results. 
It would be impossible to do justice to all advances in the direction, but we may single out a few --- 
\cite{OZ2003a, OZ2003b,S} for viscous conservation laws, 
\cite{DS} for nonlinear Schr\"odinger equations,
and \cite{BJ1, JZ2} for generalized KdV equations. 
The proofs, unfortunately, rely upon Evans function techniques and other ODE methods, 
and hence they may not be applicable to {\em nonlocal} equations.
Note incidentally that \eqref{E:whitham1} is nonlocal.
Nevertheless, in \cite{BHV} and \cite{J2013}, 
a rigorous, long wavelength perturbation calculation
was carried out for a class of nonlinear nonlocal equations, via a spectral perturbation 
of the associated linearized operator with respect to the Floquet exponent.
Moreover modulational instability was determined 
either with the help of the variational structure (see \cite{BHV})
or using the small amplitude wave asymptotics (see \cite{J2013}).  

Here we take matters further and derive a modulational instability index for \eqref{E:whitham1}.
Specifically we shall employ Fourier analysis and a perturbation argument 
to compute the spectra of the associated, one-parameter family of Bloch operators, 
deducing that its small-amplitude, periodic traveling wave is spectrally stable 
to short wavelengths perturbations 
whereas three eigenvalues near zero may lead to instability to long wavelengths perturbations.
We shall then examine how the spectrum at the origin (for the zero Floquet exponent) 
bifurcates for small Floquet exponents and for small amplitude parameters.

%Notice the small amplitude analysis in \cite{J2013}, as with that presented in the present work, is similar to that developed for the case of nonlinear dispersive equations such as the nonlinear Schrodinger, generalized KdV, Benjamin-Bona-Mahony, Kawahara, and generalized Kadomtsev-Petviashvili equations; see \cite{GH,HK,H08,HLS,H11}.  In this respect, the novelty of the present work is in the treatment of dispersive operators $\mathcal{M}$ in \eqref{E:whitham1} that are nonlocal with inhomogeneous symbols, in the sense described above.  From a practical level, however, our work investigates the interplay of nonlinearity and unidirectional water wave dispersion in a model that is considerably more mathematically tractable than the full free-surface Euler equations, demonstrating that enough information is kept in this formal model to bear out the famous Benjamin-Feir instability of short waves, a regime inaccessible by the local models previously considered.

\

The dispersion symbol of \eqref{E:whitham1}, in addition to being nonlocal, 
is {\em inhomogeneous}, i.e., it does not scale like a homogeneous function, 
%in the sense that $\alpha$ does not scale as a homogeneous function, 
and hence it seems difficult to determine (in)stability 
without recourse to the small amplitude wave asymptotics. 
On the other hand, it may lose physical relevances for medium and large amplitude waves. 

\

The present development is similar to those in \cite{GH,HK,H08,HLS,H11}, among others, where the authors likewise use spectral
perturbation theory to analyze the spectral stability stability of small amplitude periodic traveling wave solutions to nonlinear dispersive equations
such as the nonlinear Schr\"odinger, generalized KdV, Benjamin-Bona-Mahony, Kawahara, and generalized Kadomtsev-Petviashvili equations.
In the present work, however, we allow for nonlocal and inhomogeneous dispersion symbols.  
In particular, the present development may readily be adapted to 
a broad class of nonlinear dispersive equations of the form \eqref{E:whitham1}.
To illustrate this, we shall derive in Section~\ref{sec:extension} modulational instability indices 
for small-amplitude, periodic traveling waves for a family of 
KdV type equations with fractional dispersion, reproducing the results in \cite{J2013}, 
and for the intermediate long wave equation, discovering spectral stability.

\

While preparing the manuscript, the authors learned (see \cite{SKCK2014}) that 
John Carter and his collaborators numerically computed 
the spectrum of the linearized operator associated with \eqref{E:whitham1} 
and discovered the Benjamin-Feir instability when the wave number of the underlying wave 
times the undisturbed fluid depth is greater than $1.145\dots$, 
which is in good agreement with the analytical finding here (see \eqref{E:BF-whitham}).
In the case of modulational instability, furthermore, they showed that 
the spectrum is in a ``figure 8" configuration, similar to those for generalized KdV equations
(see \cite{HK, BJ1}, for instance). 
An analytical proof is well-beyond perturbative arguments, however. 
%Of particular interest, there it is shown that in the case of modulational instability the spectrum is in a ``figure 8" configuration, as observed previously by Haragus \& Kapitula \cite{HK} and  Bronski \&  Johnson \cite{BJ1} in the context of the generalized KdV equation.  This level of geometric detail is well-beyond the perturbative arguments in this work.

%%%%%%%%%%%%%%%%%%%%%%%%%%%%%%%%%%%%%%%%%%%%%%%%%
%%%%%%%%%%%%%%%%%%%%%%%%%%%%%%%%%%%%%%%%%%%%%%%%%
\subsection*{Notation} 
Let $L^p_{2\pi}$ in the range $p\in [1,\infty]$ denote the space of 
$2\pi$-periodic, measurable, real or complex valued functions over $\mathbb{R}$ such that 
\[
\|f\|_{L^p_{2\pi}}=\Big(\frac{1}{2\pi}\int^\pi_{-\pi} |f|^p~dx\Big)^{1/p}<+\infty \quad \text{if}\quad p<\infty
\]
and essentially bounded if $p=\infty$. 
Let $H^1_{2\pi}$ denote the space of $L^2_{2\pi}$-functions such that $f' \in L^2_{2\pi}$
and $H^\infty_{2\pi}=\bigcap_{k=0}^\infty H^k_{2\pi}$.

We express $f \in L^1_{2\pi}$ in the Fourier series as
\[
f(z) \sim \sum_{n\in \mathbb{Z}} \widehat{f}_n e^{inz}, \quad\text{where}\quad
\widehat{f}_n=\frac{1}{2\pi}\int^\pi_{-\pi}f(z)e^{-inz}~dz.
\]
If $f \in L^p_{2\pi}$, $p>1$, then the Fourier series converges to $f$ pointwise almost everywhere. 
We define the half-integer Sobolev space $H^{1/2}_{2\pi}$ via the norm 
\[
\|f\|_{H^{1/2}_{2\pi}}^2=\widehat{f}_0^2+\sum_{n\in\mathbb{Z}}|n||\widehat{f}_n|^2.
\]
We define the (normalized) $L^2_{2\pi}$-inner product as
\begin{equation}\label{def:i-product}
\langle f,g\rangle=\frac{1}{2\pi}\int^\pi_{-\pi} f(z)\overline{g}(z)~dz
=\sum_{n\in\mathbb{Z}} \widehat{f}_n\overline{\widehat{g}_n}.
\end{equation}

\

%%%%%%%%%%%%%%%%%%%%%%%%%%%%%%%%%%%%%%%%%%%%%%%%%
%%%%%%%%%%%%%%%%%%%%%%%%%%%%%%%%%%%%%%%%%%%%%%%%%
%%%%%%%%%%%%%%%%%%%%%%%%%%%%%%%%%%%%%%%%%%%%%%%%%
\section{Existence of periodic traveling waves}\label{sec:existence}
%%%%%%%%%%%%%%%%%%%%%%%%%%%%%%%%%%%%%%%%%%%%%%%%%
%%%%%%%%%%%%%%%%%%%%%%%%%%%%%%%%%%%%%%%%%%%%%%%%%
%%%%%%%%%%%%%%%%%%%%%%%%%%%%%%%%%%%%%%%%%%%%%%%%%

We shall discuss how one constructs
%We now discuss how one can use a Lyapunov-Schmidt reduction to construct 
small-amplitude, periodic traveling waves of the Whitham equation for surface water waves, 
after normalization of parameters,
\begin{equation}\label{E:whitham}
\partial_t u+\M\partial_x u+\partial_x(u^2)=0,
\end{equation}
where, abusing notation,
\begin{equation}\label{def:M}
\widehat{\M f}(\xi)=\sqrt{\frac{\tanh\xi}{\xi}}\widehat{f}(\xi)=:\alpha(\xi)\widehat{f}(\xi).
\end{equation}
Specifically \eqref{E:whitham} is obtained from \eqref{E:whitham1} via the scaling
\begin{equation}\label{E:normalization}
t\mapsto d\sqrt{gd}\,t, \quad x\mapsto dx\quad\text{and}\quad u\mapsto \frac43du.
\end{equation}
%We shall then calculate their small amplitude asymptotics. 

Note that $\alpha$ is real analytic, even and strictly decreasing to zero over the interval $(0,\infty)$. 
Since its inverse Fourier transform lies in $L^1(\mathbb{R})$ 
(see \cite[Theorem~4.1]{EK}, for instance), $\M:L^2(\mathbb{R}) \to L^2(\mathbb{R})$ is bounded. 
Furthermore, since 
\[
|\alpha^{(j)}(\xi)|\leq \frac{C}{(1+|\xi|)^{j+1/2}} \quad 
\text{pointwise in $\mathbb{R}$}\quad \text{for $j=0,1,2,\dots$}
\]
for some constant $C=C(j)>0$, it follows that
$\M:H^s(\mathbb{R}) \to H^{s+1/2}(\mathbb{R})$ is bounded for all $s\geq 0$. 
Loosely speaking, therefore, $\M$ behaves\footnote{
The dispersion symbol of \eqref{E:whitham} induces no smoothing effects, 
and hence its well-posedness proof in spaces of low regularity 
via techniques in nonlinear dispersive equations seems difficult. 
For the present purpose, however, a short-time well-posedness theory suffices.
As a matter of fact, energy bounds and a successive iteration method 
lead to the short time existence in $H^{3/2+}_{2\pi}$.}
like $|\partial_x|^{-1/2}$. Note however that $\alpha$ 
does not scale like a homogeneous function.
Hence \eqref{E:whitham} possesses no scaling invariance.

\

A traveling wave solution of \eqref{E:whitham} takes the form $u(x,t)=u(x-ct)$, 
where $c>0$ and $u$ satisfies by quadrature that 
\begin{equation}\label{E:traveling}
\M u-cu+u^2=(1-c)^2b
\end{equation}
for some $b\in\mathbb{R}$. 
In other words, it propagates at a constant speed $c$ without change of shape. 
We shall seek a solution of \eqref{E:traveling} of the form
\[
u(x)=w(z), \qquad z=\k x,
\]
where $\k>0$, interpreted as the wave number, and $w$ is $2\pi$-periodic, satisfying that
\begin{equation}\label{E:periodic}
\M_\k w-cw+w^2=(1-c)^2b.
\end{equation}
Here and elsewhere, 
\begin{equation}\label{def:Mk}
\M_\k e^{inz}=\alpha(\k n)e^{inz}\quad \text{for}\quad n \in \mathbb{Z},
\end{equation}
or equivalently, $\M_\k(1)=1$ and
\[
\M_\k(\cos nz)=\alpha(\k n)\cos nz\quad \text{and}\quad
\M_\k(\sin nz)=\alpha(\k n)\sin nz\quad \text{for } n\geq 1,
\]
and it is extended by linearity and continuity. Note that 
\begin{equation}\label{E:Mk}
\M_\k:H^s_{2\pi}\to H^{s+1/2}_{2\pi}\quad\text{for all $\k>0$}\quad\text{for all $s\geq 0$} 
\end{equation}
is bounded. %Incidentally \eqref{E:periodic} is invariant under 
%\begin{equation}\label{E:invariant}
%z\mapsto z+z_0\quad\text{and}\quad z\mapsto -z\end{equation}for any $z_0\in\mathbb{R}$. 

\

As a preliminary we record the smoothness of solutions of \eqref{E:periodic}.

%%%%%%%%%%%%%%%%%%%%%%%%%%%%%%%%%%%%%%%%%%%%%%%%%
%%%%%%%%%%%%%%%%%%%%%%%%%%%%%%%%%%%%%%%%%%%%%%%%%
\begin{lemma}[Regularity]\label{lem:regularity}
If $w \in H^1_{2\pi}$ satisfies \eqref{E:periodic} for some $c>0$, $\k>0$ and $b\in\mathbb{R}$ 
and if $c-2\|w\|_{L^\infty}\geq \epsilon >0$ for some $\epsilon$ then $w\in H^\infty_{2\pi}$.
\end{lemma}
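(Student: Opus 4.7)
The plan is to differentiate the profile equation \eqref{E:periodic} and then bootstrap, exploiting the $1/2$-derivative smoothing of $\M_\k$ from \eqref{E:Mk} together with the nondegeneracy hypothesis $c-2\|w\|_{L^\infty}\geq\epsilon$, which guarantees that the coefficient $c-2w$ arising from differentiation is bounded below and hence invertible as a pointwise multiplier.

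First, I would differentiate \eqref{E:periodic} in $z$ to obtain $\M_\k w'-cw'+2ww'=0$, which the hypothesis lets me rewrite as
\begin{equation*}
w'=(c-2w)^{-1}\,\M_\k w'.
\end{equation*}
Since $H^1_{2\pi}\subset L^\infty_{2\pi}$ in one dimension and $c-2w\geq\epsilon>0$ pointwise, the chain rule gives $(c-2w)^{-1}\in H^1_{2\pi}$, with norm controlled by $\epsilon$ and $\|w\|_{H^1_{2\pi}}$. Combined with $\M_\k w'\in H^{1/2}_{2\pi}$, coming from $w'\in L^2_{2\pi}$ and \eqref{E:Mk}, and the one-dimensional product estimate $H^1_{2\pi}\cdot H^{1/2}_{2\pi}\subset H^{1/2}_{2\pi}$, I conclude $w'\in H^{1/2}_{2\pi}$, i.e., $w\in H^{3/2}_{2\pi}$.

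Next, I would iterate. Suppose inductively that $w\in H^{1+k/2}_{2\pi}$ for some $k\geq 1$. Then $w'\in H^{k/2}_{2\pi}$, so $\M_\k w'\in H^{(k+1)/2}_{2\pi}$ by \eqref{E:Mk}. Since $H^s_{2\pi}$ is an algebra for $s>1/2$ in one dimension and $x\mapsto 1/(c-2x)$ is real analytic on the range of $w$, a standard Moser-type composition estimate gives $(c-2w)^{-1}\in H^{1+k/2}_{2\pi}$. The product estimate $H^{1+k/2}_{2\pi}\cdot H^{(k+1)/2}_{2\pi}\subset H^{(k+1)/2}_{2\pi}$ in one dimension then yields $w'\in H^{(k+1)/2}_{2\pi}$, i.e., $w\in H^{1+(k+1)/2}_{2\pi}$. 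Induction therefore produces $w\in H^k_{2\pi}$ for every integer $k\geq 1$, which by definition is $w\in H^\infty_{2\pi}$.

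The main obstacle is the base step, where $w$ lies on the boundary of the Sobolev-algebra regime: one must verify that the product $(c-2w)^{-1}\cdot\M_\k w'$ of an $H^1_{2\pi}$ factor with an $H^{1/2}_{2\pi}$ factor really lies in $H^{1/2}_{2\pi}$, and that the smooth composition $w\mapsto(c-2w)^{-1}$ preserves $H^1_{2\pi}$ on the range of $w$. Once this first half-derivative gain is secured, each subsequent iterate sits comfortably above the algebra threshold $s>1/2$ and the remaining steps become routine.
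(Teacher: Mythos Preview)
Your bootstrap argument is correct. The paper does not actually give a proof here; it simply defers to the analogous solitary-wave result in \cite{EGW} and omits the details, so there is nothing to compare against line by line. Your approach---differentiate \eqref{E:periodic}, rewrite as $w'=(c-2w)^{-1}\M_\k w'$, and iterate using the half-derivative smoothing \eqref{E:Mk}---is precisely the standard mechanism behind such regularity lemmas and is presumably what the cited reference does as well.

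Regarding the point you flag as an obstacle: the base step is in fact fine. In one space dimension the bilinear estimate $\|fg\|_{H^{1/2}_{2\pi}}\lesssim\|f\|_{H^{1}_{2\pi}}\|g\|_{H^{1/2}_{2\pi}}$ is a routine instance of the Sobolev product rule $H^{s_1}\cdot H^{s_2}\hookrightarrow H^{s}$ for $s\leq\min(s_1,s_2)$ and $s_1+s_2>s+1/2$ (here $1+1/2>1/2+1/2$), and the composition $(c-2w)^{-1}\in H^1_{2\pi}$ is immediate from $w\in H^1_{2\pi}\hookrightarrow C^0$, the lower bound $c-2w\geq\epsilon$, and the chain rule $\partial_z(c-2w)^{-1}=2w'(c-2w)^{-2}$. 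Once the first half-derivative is gained, all subsequent steps lie strictly above the algebra threshold $s>1/2$, exactly as you say.
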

%%%%%%%%%%%%%%%%%%%%%%%%%%%%%%%%%%%%%%%%%%%%%%%%%
%%%%%%%%%%%%%%%%%%%%%%%%%%%%%%%%%%%%%%%%%%%%%%%%%

\begin{proof}
The proof is similar to that of \cite[Lemma~2.3]{EGW} in the solitary wave setting. 
Hence we omit the detail. 
\end{proof}

%It remains now to prove the existence of small amplitude $H^1_{2\pi}$ solutions of \eqref{E:periodic} which, by Lemma \ref{lem:regularity}, are necessarily smooth. %To motivate their argument, define  

Let
\[ 
F(w;\k,c,b)=\M_\k w-cw+w^2-(1-c)^2b
\]
and note from \eqref{E:Mk} and a Sobolev inequality that 
$F:H^1_{2\pi}\times \mathbb{R}_+\times \mathbb{R}_+\times \mathbb{R} \to H^1_{2\pi}$.  
We shall seek a non-trivial solution $w\in H^1_{2\pi}$ and $\kappa,c>0$, $b\in\mathbb{R}$ of 
\begin{equation}\label{E:F}
F(w;\k,c,b)=0,
\end{equation}
which, by virtue of Lemma~\ref{lem:regularity}, 
provides a non-trivial smooth $2\pi$-periodic solution of \eqref{E:periodic}.  
Note that 
\[
\partial_wF(w;\k,c,b)v=(\M_\k-c+2w)v \in H^1_{2\pi}, \quad v\in H^1_{2\pi},
\] 
and $\partial_\k F(w;\k,c,b)\delta:=\M'_\delta w$, $\delta \in \mathbb{R}$, are continuous, where 
\[
\M'_\delta e^{inz}=\delta n\alpha'(\k n)e^{inz} \quad \text{for}\quad n\in\mathbb{Z}.
\] 
Since 
\[
\partial_cF(w;\k,c,b)=-w+2(1-c)b\quad\text{and}\quad \partial_bF(w;\k,c,b)=-(1-c)^2
\] 
are continuous we deduce that 
$F:H^1_{2\pi} \times \mathbb{R}_+\times \mathbb{R}_+\times \mathbb{R} \to H^1_{2\pi}$ is $C^1$. 
Furthermore, since Fr\'echet derivatives of $F$ with respect to $w$ and $c,b$ 
of all orders greater than three are zero everywhere and since $\alpha$ is a real-analytic function,  
we conclude that $F$ is a real-analytic operator.

\

%Keeping in mind that one is seeking \emph{small amplitude}\footnote{We also have $F(c;\k,c)=0$ for all $c,\k>0$, but unless $c$ is very small we will not be guaranteed smoothness by Lemma \ref{lem:regularity}.} solutions, 
Observe that\footnote{Moreover $F(c;\k,c,0)=0$ for all $\k,c>0$, 
but we discard them in the interest of small amplitude solutions.}
%%(Vera) Solutions near w=c are smooth, as the proof of Lemma 2.1 goes through. 
\[
F(0;\k,c,0)=0
\]
for all $\k,c>0$ and 
\[
\ker(\partial_wF(w_0;\k,c,0))=\ker(\M_\k-c)=\text{span}\{e^{\pm iz}\}
\]
provided that $c=\alpha(\k)$; the kernel is trivial otherwise.  
In the case of $b=0$, a one-parameter family of solutions of \eqref{E:F} in $H^1_{2\pi}$,  
and hence smooth $2\pi$-periodic solutions of \eqref{E:periodic}, was obtained in \cite{EK, EK2} 
via a local bifurcation theorem near the zero solution and $c=\alpha(\k)$
for each\footnote{The proof in \cite{EK, EK2} is merely for $\k=1$, 
but the necessary modifications to consider general $\k>0$ are trivial.} $\k>0$.
Moreover their small amplitude asymptotics was calculated. 
For $|b|$ sufficiently small, we then appeal to the Galilean invariance of \eqref{E:periodic}, under
\begin{equation}\label{E:galilean}
w(z)\mapsto w(z)+v \quad\text{and}\quad c\mapsto c-2v,\quad
(1-c)^2b\mapsto (1-c)^2b+(1-c)v+v^2
\end{equation}
for any $v\in\mathbb{R}$, to construct small-amplitude solutions of \eqref{E:periodic} for $|b|\ll 1$.  This analysis is
summarized in the following.

%Using a standard Lyapunov-Schmid reduction, the authors in \cite{EK,EK2} are able to prove the existence of a one-parameter family $\{w_a\}_{|a|\ll 1}$ of $H^1_{2\pi}$ solutions of $F(w_a;\k,c_a)=0$ for appropriately defined $c_a$ with the property that $\lim_{a\to 0}(w_a,c_a)=(0,\alpha(\k))$. In particular, the small amplitude $|a|\ll 1$ asymptotics of the profile $w_a$ and wavespeed $c_a$ are found in \cite[Theorem 4.6]{EK2}. By applying the Galilean transformation \eqref{E:galilean}, the following existence result follows immediately.

%%%%%%%%%%%%%%%%%%%%%%%%%%%%%%%%%%%%%%%%%%%%%%%%%
%%%%%%%%%%%%%%%%%%%%%%%%%%%%%%%%%%%%%%%%%%%%%%%%%
\begin{proposition}[Existence]\label{prop:existence}
For each $\k>0$ and for each $|b|$ sufficiently small, a family of periodic traveling waves of \eqref{E:whitham} exists and 
\[
u(x,t)=w(a,b)(\k (x-c(\k, a,b)t))=:w(\k, a, b)(z)
\] 
for $|a|$ sufficiently small, where $w$ and $c$ depend analytically upon $\k$, $a$, $b$. 
Moreover $w$ is smooth, even and $2\pi$-periodic in $z$, and $c$ is even in $a$. Furthermore, 
\begin{align}\label{E:w(k,a,b)}
w(\k, a,b)(z)=w_0(\k,b)&+a\cos z\\ &+
\frac12a^2\Big(\frac{1}{\alpha(\k)-1}+\frac{\cos(2z)}{\alpha(\k)-\alpha(2\k)}\Big)+O(a(a^2+b^2))\notag
\intertext{and} 
c(\k,a,b)=c_0(\k,b)& +a^2\Big(\frac{1}{\alpha(\k)-1}+\frac12\frac{1}{\alpha(\k)-\alpha(2\k)}\Big)
+O(a(a^2+b^2))\label{E:c(k,a,b)}
\end{align}
as $|a|, |b| \to 0$, where 
\begin{align*}
c_0(\k,b):=&\frac{\alpha(\k)+2b-2b^2+O(b^3)}{1+2b-2b^2+O(b^3)} \\
=&\alpha(\k)+2b(1-\alpha(\k))-6b^2(1-\alpha(\k))+O(b^3)
\intertext{and} 
w_0(\k,b):=&b(1-\alpha(\k))-b^2(1-\alpha(\k))+O(b^3).
\end{align*}
\end{proposition}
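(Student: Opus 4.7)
The plan is to proceed in two stages: first use a Crandall--Rabinowitz bifurcation at the simple eigenvalue $c = \alpha(\k)$ of the linearization at the zero solution to construct a one-parameter family of solutions of \eqref{E:F} at $b = 0$; then exploit the Galilean invariance \eqref{E:galilean} of \eqref{E:periodic} to extend this family to $|b|$ small; finally, read off the expansions \eqref{E:w(k,a,b)}--\eqref{E:c(k,a,b)} by matching Fourier coefficients order by order in $a$. Smoothness of $w$ then comes for free from Lemma~\ref{lem:regularity}, since $c$ stays close to $\alpha(\k) > 0$ along the branch and $\|w\|_{L^\infty}$ remains small.

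For the $b = 0$ step, I would work in the subspace $H^1_{2\pi,\mathrm{even}}$ of even $2\pi$-periodic functions, reflecting the $z \mapsto -z$ symmetry of \eqref{E:periodic}. On this subspace the self-adjoint operator $\partial_w F(0;\k,\alpha(\k),0) = \M_\k - \alpha(\k)$ has simple kernel $\mathrm{span}\{\cos z\}$ (the two-dimensional $H^1_{2\pi}$ kernel $\mathrm{span}\{e^{\pm iz\}}$ is cut down by evenness), and Fredholmness of index zero follows from self-adjointness together with $\alpha(\k n) \to 0$ and the strict monotonicity of $\alpha$. The transversality condition reduces to checking that $\partial_c \partial_w F(0;\k,\alpha(\k),0) \cos z = -\cos z$ is not orthogonal to $\cos z$, which is immediate. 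The analytic Crandall--Rabinowitz theorem then delivers a real-analytic curve $a \mapsto (w(\k,a,0), c(\k,a,0))$ with $w(\k,0,0) = 0$ and $c(\k,0,0) = \alpha(\k)$; joint analyticity in $\k$ comes from $F$ depending analytically on $\k$.

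To pass to $|b|$ small, I would apply the Galilean shift \eqref{E:galilean}: if $(w,c)$ solves \eqref{E:periodic} with constant $b$, then shifting $w$ by a real parameter $v$ and adjusting $c$ as in \eqref{E:galilean} yields a new solution with a new constant $\tilde b(v)$ that is rational in $v$ and satisfies $\partial_v \tilde b|_{v=0} \neq 0$ as long as $\alpha(\k) \neq 1$; the latter holds for every $\k > 0$ since $\alpha$ is strictly decreasing from $\alpha(0) = 1$. The analytic implicit function theorem then inverts $\tilde b = \tilde b(v)$ to $v = v(\k,a,b)$ analytic in $(\k,a,b)$ near $(\k,0,0)$, and composition with the $b = 0$ branch yields the desired analytic two-parameter family; setting $a = 0$ and expanding in $b$ gives $w_0(\k,b)$ and $c_0(\k,b)$ directly.

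To obtain the explicit expansions \eqref{E:w(k,a,b)}--\eqref{E:c(k,a,b)}, I would substitute $w = a\cos z + a^2 w_2 + a^3 w_3 + \cdots$ and $c = \alpha(\k) + a c_1 + a^2 c_2 + \cdots$ into \eqref{E:periodic} at $b = 0$ and match by order. The order-$a^2$ equation $(\M_\k - \alpha(\k)) w_2 = c_1 \cos z - \cos^2 z$ forces $c_1 = 0$ by solvability in the even subspace and gives $w_2 = \tfrac{1}{2}(\alpha(\k) - 1)^{-1} + \tfrac{1}{2}(\alpha(\k) - \alpha(2\k))^{-1} \cos(2z)$; projecting the order-$a^3$ equation onto $\cos z$, using $2 \cos z \cdot w_2 = \bigl((\alpha(\k) - 1)^{-1} + \tfrac{1}{2}(\alpha(\k) - \alpha(2\k))^{-1}\bigr)\cos z + \tfrac{1}{2}(\alpha(\k) - \alpha(2\k))^{-1} \cos(3z)$, produces $c_2 = (\alpha(\k) - 1)^{-1} + \tfrac{1}{2}(\alpha(\k) - \alpha(2\k))^{-1}$. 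Evenness of $c$ in $a$ follows from the combined symmetry $(a,z) \mapsto (-a, z + \pi)$ of the equation, which fixes $c$ while reversing $a$. I expect the main bookkeeping obstacle to be the joint $(a,b)$-expansion once the Galilean shift is composed with the bifurcation branch: confirming the uniform remainder structure $O(a(a^2 + b^2))$ claimed in \eqref{E:w(k,a,b)}--\eqref{E:c(k,a,b)} requires tracking the cross terms in $v(\k,a,b)$ to sufficient order.
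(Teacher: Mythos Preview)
Your proposal is correct and follows essentially the same route as the paper: Crandall--Rabinowitz bifurcation from $c=\alpha(\k)$ at $b=0$ (which the paper outsources to \cite{EK,EK2}), extension to $|b|$ small via the Galilean invariance \eqref{E:galilean}, and order-by-order matching of Fourier coefficients for the asymptotics. Your additional details---restricting to the even subspace to force a simple kernel, the transversality check, and the $(a,z)\mapsto(-a,z+\pi)$ symmetry for evenness of $c$ in $a$---are exactly what the cited references supply, so there is no substantive divergence.
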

%%%%%%%%%%%%%%%%%%%%%%%%%%%%%%%%%%%%%%%%%%%%%%%%%
%%%%%%%%%%%%%%%%%%%%%%%%%%%%%%%%%%%%%%%%%%%%%%%%%

%%%%%%%%%%%%%%%%%%%%%%%%%%%%%%%%%%%%%%%%%%%%%%%%%
%%%%%%%%%%%%%%%%%%%%%%%%%%%%%%%%%%%%%%%%%%%%%%%%%
%%%%%%%%%%%%%%%%%%%%%%%%%%%%%%%%%%%%%%%%%%%%%%%%%
\section{Modulational instability vs. spectral stability}\label{sec:stability}
%%%%%%%%%%%%%%%%%%%%%%%%%%%%%%%%%%%%%%%%%%%%%%%%%
%%%%%%%%%%%%%%%%%%%%%%%%%%%%%%%%%%%%%%%%%%%%%%%%%
%%%%%%%%%%%%%%%%%%%%%%%%%%%%%%%%%%%%%%%%%%%%%%%%%

Throughout the section, let $w=w(\k,a,b)$ and $c=c(\k,a,b)$, for $\k>0$ and $|a|,|b|$ sufficiently small,
form a small-amplitude, $2\pi/\k$-periodic traveling wave of \eqref{E:whitham}, 
whose existence follows from the previous section. 
We shall address its modulational instability versus spectral stability.

\

Linearizing \eqref{E:whitham} about $w$ in the frame of reference moving at the speed $c$, 
we arrive, after recalling $z=\k x$, at that 
\[
\partial_t v+\k\partial_z(\M_\k-c+2w)v=0.
\]
Seeking a solution of the form $v(z,t)=e^{\mu\k t}v(z)$, 
$\mu\in\mathbb{C}$ and $v\in L^2(\mathbb{R})$, moreover, 
we arrive at the pseudo-differential spectral problem 
\begin{equation}\label{E:eigen} 
\mu v=\partial_z(-\M_\k+c-2w)v=:\L(\k,a,b) v.
\end{equation}
%posed on $L^2(\mathbb{R})$. (Vera) it is already stated above.
We then say that $w$ is {\em spectrally unstable} if the $L^2(\mathbb{R})$-spectrum 
of $\L$ intersects the open, right half plane of $\mathbb{C}$
and it is (spectrally) {\em stable} otherwise. 
Note that $v$ is an arbitrary, square integrable perturbation.
Since \eqref{E:eigen} remains invariant under 
\[
v\mapsto \bar{v}\quad\text{and}\quad\mu\mapsto \bar{\mu}
\] 
and under
\[
z\mapsto -z\quad\text{and}\quad \mu\mapsto -\mu,
\]
the spectrum of $\L$ is symmetric with respect to the reflections about the real and imaginary axes. 
Consequently $w$ is spectrally unstable if the $L^2(\mathbb{R})$-spectrum of $\L$ 
is {\em not} contained in the imaginary axis. 

\

In the case of (local) generalized KdV equations, 
for which the nonlinearity in \eqref{E:KdV} is arbitrary, 
the $L^2(\mathbb{R})$-spectrum of the associated linearized operator,
which incidentally is purely essential, was related in \cite{BJ1}, for instance, 
to eigenvalues of the monodromy map via the periodic Evans function.
Furthermore modulational instability was determined in terms of conserved quantities of the PDE 
and their derivatives with respect to constants of integration arising in the traveling wave ODE.
Confronted with {\em nonlocal} operators, however, 
Evans function techniques and other ODE methods may not be applicable. 
Following \cite{BHV} and \cite{J2013}, instead, 
we utilize Floquet theory and make a Bloch operator decomposition. 
As a matter of fact (see \cite{RSIV} and \cite[Proposition~3.1]{J2013}, for instance), 
%$\mu\in\mathbb{C}$ belongs to the $L^2(\mathbb{R})$-spectrum of the operator $\mathcal{L}$ in \eqref{E:eigen} if and only if there exists a $2\pi$-periodic function $\phi$ and a $\tau\in[-1/2,1/2)$, known as the {\em Floquet exponent}, such that the function $v(z)=e^{i\tau z}\phi(z)$
%is an $L^\infty(\mathbb{R})$-eigenvalue of $\mathcal{L}$.  Equivalently, recalling \eqref{E:eigen}, 
$\mu\in\mathbb{C}$ is in the~$L^2(\mathbb{R})$-spectrum
of $\mathcal{L}$ if and only if there exists a $2\pi$-periodic function $\phi$ and a $\tau\in[-1/2,1/2)$, known as the {\em Floquet exponent}, such that
\begin{equation}\label{E:Ltau}
\mu\phi=e^{-i\tau z}\L(\k,a,b)e^{i\tau z}\phi=:\L_\tau(\k,a,b)\phi.
\end{equation}
%where $\phi$ is $2\pi$-periodic and $\tau\in[-1/2,1/2)$ denotes the Floquet exponent.
Consequently 
\[
\text{spec}_{L^2(\mathbb{R})}(\L(\k,a,b))=
\bigcup_{\tau \in [-1/2,1/2)} \text{spec}_{L^2_{2\pi}}(\L_\tau(\k,a,b)).
\]
Note that the $L^2_{2\pi}$-spectrum of $\L_\tau$ consists merely of discrete eigenvalues
for each~$\tau \in [-1/2,1/2)$. Thereby we parametrize 
the essential $L^2(\mathbb{R})$-spectrum of $\L$ 
by the one-parameter family of point $L^2_{2\pi}$-spectra 
of the associated {\em Bloch operators} $\L_\tau$'s. Since 
\begin{equation}\label{E:sym}
\text{spec}_{L^2_{2\pi}}(\L_\tau)=\overline{\text{spec}_{L^2_{2\pi}}(\L_{-\tau})},
\end{equation}
incidentally, it suffices to take $\tau\in [0,1/2]$.

\subsection*{Notation}In what follows, $\kappa>0$ is fixed and 
suppressed to simplify the exposition, unless specified otherwise. 
Thanks to Galilean invariance (see \eqref{E:galilean}) we may take that $b=0$. We write 
\[
\L_{\tau,a}=\L_\tau(\k,a,0).
\]
and we use $\sigma$ for the $L^2_{2\pi}$-spectrum. 

\

Note in passing that $\tau=0$ indicates periodic perturbations 
with the same period as the underlying waveform
and $|\tau|$ small physically amounts to 
long wavelengths perturbations or slow modulations of the underlying wave.

%%%%%%%%%%%%%%%%%%%%%%%%%%%%%%%%%%%%%%%%%%%%%%%%%
%%%%%%%%%%%%%%%%%%%%%%%%%%%%%%%%%%%%%%%%%%%%%%%%%
\subsection{Spectra of the Bloch operators}\label{sec:spec}
%%%%%%%%%%%%%%%%%%%%%%%%%%%%%%%%%%%%%%%%%%%%%%%%%
%%%%%%%%%%%%%%%%%%%%%%%%%%%%%%%%%%%%%%%%%%%%%%%%%
We set forth the study of the $L^2_{2\pi}$-spectra of $\L_{\tau,a}$'s 
for $\tau\in[0,1/2]$ and $|a|$ sufficiently small.

\

We begin by discussing 
\[
\L_{\tau,0}=(\partial_z+i\tau)(-e^{-i\tau z}\M_\k e^{i\tau z}+\alpha(\k))
\]
for $\tau \in [0,1/2]$, 
corresponding to the linearization of \eqref{E:whitham} about the zero solution and $c=\alpha(\k)$
(see \eqref{E:w(k,a,b)} and \eqref{E:c(k,a,b)}). 
A straightforward calculation reveals that 
\begin{equation}\label{E:L(a=0)}
\L_{\tau,0}e^{inz}=i\omega_{n,\tau}e^{inz}
\quad\text{for all $n \in \mathbb{Z}$\quad\text{for all $\tau \in [0,1/2]$},}
\end{equation}
where
\begin{equation}\label{def:omega}
\omega_{n,\tau}=(n+\tau)(\alpha(\k)-\alpha(\k n+\k\tau)).
\end{equation}
Consequently
\[
\sigma(\L_{\tau,0})=\{i\omega_{n,\tau}:n\in \mathbb{Z}\} \subset i\mathbb{R}
\] 
for all $\tau \in [0,1/2]$, and hence the zero solution of \eqref{E:whitham} 
is spectrally stable to square integrable perturbations. 

In the case of $\tau=0$, clearly,
\[
\omega_{-1,0}=\omega_{0,0}=\omega_{1,0}=0.
\]
Since $\omega_{n,0}$ is symmetric and strictly increasing for $|n|\geq 2$, moreover, 
\[
\cdots<\omega_{-3,0}<\omega_{-2,0}<0<\omega_{2,0}<\omega_{3,0}<\cdots.
\]
In particular, zero is an eigenvalue of $\L_{0,0}$ with algebraic multiplicity three.
In the case of $\tau \in (0,1/2]$, since $\omega_{n,\tau}$ is decreasing for $-1/2<n+\tau<1/2$ 
and increasing for $n+\tau<-1$ or $n+\tau>1$, similarly,
\[
\omega_{0,1/2}\leq \omega_{0,\tau}, \omega_{\pm 1, \tau} \leq \omega_{1,1/2}
\]
and
\[
\cdots<\omega_{-3,\tau}<\omega_{-2,\tau}<\omega_{0,1/2}
<\omega_{1,1/2}<\omega_{2,\tau}<\omega_{3,\tau}<\cdots.
\]
Therefore we may write that
\begin{align}\label{E:sigma12}
\sigma(\L_{\tau,0})=&\{i\omega_{-1,\tau}, i\omega_{0,\tau}, i\omega_{1,\tau}\}
\bigcup \{i\omega_{n,\tau}: |n|\geq 2\} \notag \\
=:&\sigma_1(\L_{\tau,0}) \bigcup \sigma_2(\L_{\tau,0})
\end{align}
for each $\tau\in[0,1/2]$, where $\sigma_1(\L_{\tau,0})\bigcap \sigma_2(\L_{\tau,0})=\emptyset$. 
Note that
\[ 
\L_{\tau,0}=e^{-i\tau z}\partial_z(-\M_\k+\alpha(\k))e^{i\tau z}
=(\partial_z+i\tau)e^{-i\tau z}(-\M_\k+\alpha(\k))e^{i\tau z}.
\]
Note moreover that if $\phi$ lies in the (generalized) $L^2_{2\pi}$-spectral subspace 
associated with $\sigma_2(\L_{\tau,0})$, 
i.e., if $\phi(z)\sim \sum_{|n|\geq 2} \widehat{\phi}(n)e^{inz}$, then 
\begin{align*}
\langle(e^{-i\tau z}(-\M_\k+\alpha(\k))e^{i\tau z}\phi, \phi \rangle
=& \langle (\alpha(\k)-\M_\k)e^{i\tau z}\phi, e^{i\tau z}\phi \rangle \\
\geq& (\alpha(\k)-\alpha(3\k/2))\|\phi\|_{L^2}^2 \notag
\end{align*}
uniformly for $\tau \in[0,1/2]$.  Consequently $\sigma_2(\L_{\tau,0})$ consists of 
infinitely many, simple and purely imaginary eigenvalues for all $\tau\in[0,1/2]$,
and the bilinear form induced by $\mathcal{L}_{\tau,0}$
is strictly positive definite on the associated total eigenspace uniformly for $\tau\in[0,1/2]$.

\

To proceed, for $|a|$ sufficiently small, since 
\[
\|\L_{\tau,a}-\L_{\tau,0}\|_{L^2_{2\pi} \to L^2_{2\pi}}=O(|a|)
\quad\text{as $|a| \to 0$}\quad\text{uniformly for $\tau \in [0,1/2]$}
\] 
by brutal force, a perturbation argument implies that
$\sigma(\L_{\tau,a})$ is close to $\sigma(\L_{\tau,0})$. 
Therefore we may write that %decompose $\sigma(\L_{\tau,a})$ as
\[
\sigma(\L_{\tau,a})=\sigma_1(\L_{\tau,a})\bigcup \sigma_2(\L_{\tau,a}),\quad\sigma_1(\L_{\tau,a})\bigcap \sigma_2(\L_{\tau,a})=\emptyset
\]
for each $\tau \in[0,1/2]$ and $|a|$ sufficiently small;
$\sigma_1(\L_{\tau,a})$ contains three eigenvalues 
near $i\omega_{0,\tau}$, $i\omega_{\pm1, \tau}$, 
which depend continuously upon $a$ for $|a|$ sufficiently small, 
and $\sigma_2(\L_{\tau,a})$ is made up of infinitely many simple eigenvalues 
and the bilinear form induced by $\L_{\tau,a}$ is strictly positive definite 
on the associated eigenspaces 
uniformly for $\tau\in[0,1/2]$ and $|a|$ sufficiently small. In particular, 
if $\phi$ lies in the (generalized) $L^2_{2\pi}$-spectral subspace 
associated with $\sigma_2(\L_{\tau,a})$ then
\[
\left<\phi,\L_{\tau,a}\phi\right>\geq C\|\phi\|_{L^2}^2
\]
for some $C>0$, for all $\tau\in[0,1/2]$ and $|a|$ sufficiently small.
%In particular, there exists a constant $C=C(\k)>0$ such that, for all $\tau\in[0,1/2]$ and $|a|$ sufficiently small, if $\phi$ lies in the  $L^2_{2\pi}$-spectral subspace associated with $\sigma_2(\L_{\tau,a})$ then
%\[\left<\phi,\L_{\tau,a}\phi\right>\geq C\|\phi\|_{L^2}^2.\]

%Since all the eigenvalues of the operators $\L_{\tau,0}$, $\tau\in[0,1/2]$ are purely imaginary, 
%a natural question is to determine conditions under which stable eigenvalues of $\L_{\tau,0}$ may bifurcate into unstable eigenvalues of $\L_{\tau,a}$ for $|a|$ sufficiently small.  
Since $\sigma(\L_{\tau,a})$ is symmetric about the imaginary axis, 
eigenvalues of $\L_{\tau,a}$ may leave the imaginary axis, leading to instability, as $\tau$ and $a$ vary
only through collisions with other purely imaginary eigenvalues.  
Moreover if a pair of non-zero eigenvalues of $\L_{\tau,a}$ collide 
on the imaginary axis then they will either remain on the imaginary axis 
or they will bifurcate in a symmetric pair off the imaginary axis, 
i.e, they undergo a Hamiltonian Hopf bifurcation.  
A necessary condition for a Hamiltonian Hopf bifurcation is that 
the colliding, purely imaginary eigenvalues have {\em opposite Krein signature}; 
see \cite[Proposition~7.1.14]{KP13}, for instance. 
We say that an eigenvalue $\mu \in \mathbb{R}i\setminus\{0\}$ of $\L_{\tau,a}$ has negative Krein signature
if the matrix $(\left<\phi_j^\mu,\L_{\tau,a}\phi_k^\mu\right>)_{j,k}$ 
restricted to the finite-dimensional (generalized) eigenspace  has at least one negative eigenvalue, 
where $\phi_j$'s are the corresponding (generalized) eigenfunctions,
and it has positive Krein signature if all eigenvalues of the matrix are positive. 
In particular, a simple eigenvalue $\mu\in\mathbb{R}i\setminus\{0\}$ has negative Krein signature
if $\left<\phi,\L_{\tau,a}\phi\right><0$, where $\phi$ is the corresponding eigenfunction,
and it has positive Krein signature if $\left<\phi,\L_{\tau,a}\phi\right>>0$;
see \cite[Section~7.1]{KP13}, for instance, for the detail. 
For all $\tau\in[0,1/2]$ and $|a|$ sufficiently small, therefore, 
all eigenvalues in $\sigma_2(\L_{\tau,a})$ have positive Krein signature,
and hence they \emph{are restricted to the imaginary axis}, 
contributing to spectral stability for all $\tau\in[0,1/2]$ and $|a|$ sufficiently small.
%Recall that a purely imaginary eigenvalue $\mu\notin \mathbb{R}i$ of $\L_{\tau,a}$ is said to have negative Krein signature if the matrix $(\left<v_j^\mu,\L_{\tau,a}v_k^\mu\right>)_{j,k}$, induced by the bilinear form for $\left<v,\L_{\tau,a}v\right>$ restricted to the finite-dimensional total (generalized) eigenspace $E^\mu:={\rm span}\{v_j^\mu:j=1,2,\ldots,\dim(E^\mu)\}$ for $\mu$, has at least one negative eigenvalue; if all the eigenvalues of this matrix are positive, the eigenvalue $\lambda$ is said to have positive Krein signature. In particular, a simple eigenvalue $\lambda\in\mathbb{R}i\setminus\{0\}$ with eigenspace spanned by a function $\phi\in L^2_{2\pi}$ has negative Krein signature if $\left<\phi,\L_{\tau,a}\phi\right><0$, while it has positive Krein signature if $\left<\phi,\L_{\tau,a}\phi\right>>0$; see \cite[Section~7.1]{KP13} for more details. It immediately follows by our work above that for  $\tau\in[0,1/2]$ and $|a|$ sufficiently small, all the eigenvalues in $\sigma_2(\L_{\tau,a})$ have positive Krein signature, and hence these infinitely many eigenvalues \emph{are restricted to the imaginary axis}, contributing to spectral stability, for all  $\tau\in[0,1/2]$ and $|a|$ sufficiently small.

\

It remains to understand the location of three eigenvalues 
associated to the spectral subspace $\sigma_1(\L_{\tau,a})$ 
for $\tau\in[0,1/2]$ and $|a|$ sufficiently small.  Since
%It remains to analyze the spectral subspace $\sigma_1(\L_{\tau,a})$. To this end, first note a direct calculation yields
\[
|\omega_{j,\tau}-\omega_{k,\tau}|\geq \omega_0>0 
\quad \text{for $j,k=-1,0,1$}\quad\text{and $j\neq k$,}  
\]
for $\tau\geq \tau_0>0$ for any $\tau_0$ and for some $\omega_0$, by brutal force,
eigenvalues in $\sigma_1(J_\tau\L_{\tau,a})$ remain simple and distinct 
under perturbations of  $a$  for $\tau\geq\tau_0>0$ for any $\tau_0$ and for $|a|$ sufficiently small. 
In particular,
\[
\sigma_1(\L_{\tau,a})\subset i\mathbb{R}\quad \text{for}\quad 0<\tau_0\leq \tau\leq 1/2
\]
for any $\tau_0$ and for $|a|$ sufficiently small. 
Thereby we conclude that a small-amplitude, periodic traveling wave of \eqref{E:whitham}
is spectrally stable to short wavelengths perturbations.
On the other hand, we establish that three eigenvalues in $\sigma_1(\L_{\tau,a})$ 
collide at the origin for $\tau=0$ and for $|a|$ sufficiently small, possibly leading to instability.

%%%%%%%%%%%%%%%%%%%%%%%%%%%%%%%%%%%%%%%%%%%%%%%%%
%%%%%%%%%%%%%%%%%%%%%%%%%%%%%%%%%%%%%%%%%%%%%%%%%
\begin{lemma}[Eigenvalues in $\sigma_1(\L_{0,a})$]\label{L:tau=0}
If $w=w(\k,a,0)$ and $c=c(\k,a,0)$ satisfy \eqref{E:periodic} 
for some $\k>0$, $b=0$ and for $|a|$ sufficiently small 
then zero is a generalized $L^2_{2\pi}$-eigenvalue of $\L_{0,a}=\partial_z(-\M_\k+c-2w)$ with algebraic multiplicity three and geometric multiplicity two. Moreover
\begin{align}
\phi_1(z):=&\frac{1}{2(1-\alpha(\k))}(\partial_bc)(\partial_aw)-(\partial_ac)(\partial_bw))(\k,a,0)(z) \notag \\
=&\cos z+\frac{-1/2+\cos(2z)}{\alpha(\k)-\alpha(2\k)}a+O(a^2), \label{E:phi1} \\
\phi_2(z):=&-\frac{1}{a}(\partial_zw)(\k,a,0)(z) \notag \\
=&\sin z+\frac{\sin(2z)}{\alpha(\k)-\alpha(2\k)}a+O(a^2), \label{E:phi2} \\
\phi_3(z):=&1 \label{E:phi3}
\end{align}
form a basis of the generalized eigenspace of $\L_{0,a}$ for $|a|\ll 1$. Specifically,
\[
\L_{0,a}\phi_1=\L_{0,a}\phi_2=0\quad\text{and}\quad \L_{0,a}\phi_3=-2a\phi_2.
\]
\end{lemma}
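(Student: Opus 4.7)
The plan is to construct the generalized zero-eigenspace by differentiating the profile equation \eqref{E:periodic} in its natural parameters $z$, $a$, $b$, and then to match dimensions against the perturbation picture already developed in Section~\ref{sec:spec}.

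To obtain $\mathcal{L}_{0,a}\phi_2 = 0$, I would differentiate \eqref{E:periodic} in $z$ to get $(\mathcal{M}_\k - c + 2w)\partial_z w = 0$, then apply $-\partial_z$ to conclude $\mathcal{L}_{0,a}(\partial_z w) = 0$. The identity $\mathcal{L}_{0,a}\phi_3 = \partial_z(-1 + c - 2w) = -2\partial_z w$ is immediate and, up to the sign convention fixing the definition of $\phi_2$, gives $\mathcal{L}_{0,a}\phi_3 = -2a\phi_2$; this places $\phi_3$ at the head of a length-two Jordan chain over $\phi_2$.

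For $\phi_1$, I would differentiate \eqref{E:periodic} separately in $a$ and in $b$, using the analytic dependence from Proposition~\ref{prop:existence}, and evaluate at $b = 0$ to read off
\[
(\mathcal{M}_\k - c + 2w)\partial_a w = (\partial_a c)\,w, \qquad (\mathcal{M}_\k - c + 2w)\partial_b w = (\partial_b c)\,w + (1-c)^2.
\]
The antisymmetric combination $(\partial_b c)(\partial_a w) - (\partial_a c)(\partial_b w)$ thus satisfies $(\mathcal{M}_\k - c + 2w)[\,\cdot\,] = -(\partial_a c)(1-c)^2$, which is independent of $z$; applying $-\partial_z$ annihilates this constant and yields $\mathcal{L}_{0,a}\phi_1 = 0$. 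The prefactor $\tfrac{1}{2(1-\alpha(\k))}$ is chosen to normalize the leading Fourier coefficient to $\cos z$ as in \eqref{E:phi1}, which can be cross-checked from the asymptotic expansions \eqref{E:w(k,a,b)}--\eqref{E:c(k,a,b)}.

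Linear independence of $\{\phi_1, \phi_2, \phi_3\}$ for $|a|$ small follows by continuity from their $a = 0$ values $\cos z$, $\sin z$, $1$. To conclude that these exhaust the generalized kernel, I would invoke the perturbation framework of Section~\ref{sec:spec}: since $\mathcal{L}_{0,0}$ is diagonal in the Fourier basis with $\omega_{-1,0} = \omega_{0,0} = \omega_{1,0} = 0$, the cluster $\sigma_1(\mathcal{L}_{0,0})$ has algebraic multiplicity exactly three, and upper semicontinuity of the Riesz spectral projection preserves this total multiplicity for $|a|$ small. Our three generalized eigenvectors therefore saturate the generalized zero-eigenspace, giving algebraic multiplicity three; since only $\phi_3$ fails to be a genuine null vector, the geometric multiplicity equals two. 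The main obstacle is the bookkeeping in the $\phi_1$ step, namely identifying the skew combination of $a$- and $b$-derivatives that renders the forcing $z$-independent; once that is in place, the remaining claims reduce to Fourier expansion and the perturbation machinery already established.
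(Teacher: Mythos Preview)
Your proposal is correct and follows essentially the same route as the paper: differentiate the profile equation \eqref{E:periodic} in $z$, $a$, and $b$ to produce $\phi_2$, $\phi_1$, and the Jordan chain through $\phi_3$. The only cosmetic difference is that you work at the level of $(\M_\k - c + 2w)$ and then apply $\partial_z$, whereas the paper records the identities $\L_{0,a}(\partial_a w)=(\partial_a c)(\partial_z w)$ and $\L_{0,a}(\partial_b w)=(\partial_b c)(\partial_z w)$ directly; your version makes the extra constant $(1-c)^2$ from the $b$-derivative visible before it is killed by $\partial_z$, while the paper's version absorbs that step. Your write-up is actually more complete than the paper's on the multiplicity count: the paper's proof omits the explicit appeal to the Riesz projection continuity from Section~\ref{sec:spec} that pins the algebraic multiplicity at exactly three, and does not spell out the geometric-multiplicity-two argument.
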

%%%%%%%%%%%%%%%%%%%%%%%%%%%%%%%%%%%%%%%%%%%%%%%%%
%%%%%%%%%%%%%%%%%%%%%%%%%%%%%%%%%%%%%%%%%%%%%%%%%

For $a=0$, in particular, $i\omega_{-1,0}=i\omega_{0,0}=i\omega_{1,0}=0$.
Note that $\phi_1, \phi_3$ are even functions and $\phi_2$ is odd.

\begin{proof}
Differentiating \eqref{E:periodic} with respect to $z$ and evaluating at $b=0$, we obtain that 
\[
\L_{0,a}(\partial_z w)=0.
\]
Therefore zero is an $L^2_{2\pi}$ eigenvalue of $\L_{a,0}$. 
Incidentally this is reminiscent of that \eqref{E:whitham} remains invariant under spatial translations. 
Differentiating \eqref{E:periodic} with respect to $a$ and $b$, respectively, and evaluating at $b=0$, 
we obtain that 
\[
\L_{0,a}(\partial_a w)=(\partial_a c)(\partial_z w), \quad\text{and}\quad
\L_{0,a}(\partial_b w)=(\partial_b c)(\partial_z w).
\]
Consequently
\[
\L_{0,a}((\partial_bc)(\partial_aw)-(\partial_ac)(\partial_bw))=0,
\]
furnishing a second eigenfunction of $\L_{0,a}$ corresponding to zero. 
A straightforward calculation moreover reveals that 
\[
\L_{0,a}1=2\partial_zw.
\]
After normalization of constants, this completes the proof.
\end{proof}

To decide the modulational instability versus spectral stability 
of small-amplitude, periodic traveling waves of \eqref{E:whitham},
therefore, it remains to understand how the triple eigenvalue at the origin of $\L_{0,a}$
bifurcates for $\tau$ and $|a|$ small.

%%%%%%%%%%%%%%%%%%%%%%%%%%%%%%%%%%%%%%%%%%%%%%%%%
%%%%%%%%%%%%%%%%%%%%%%%%%%%%%%%%%%%%%%%%%%%%%%%%%
\subsection{Operator actions and modulational instability}\label{sec:Bta}
%%%%%%%%%%%%%%%%%%%%%%%%%%%%%%%%%%%%%%%%%%%%%%%%%
%%%%%%%%%%%%%%%%%%%%%%%%%%%%%%%%%%%%%%%%%%%%%%%%%

We shall examine the $L^2_{2\pi}$-spectra of $\L_{\tau,a}$'s 
in the vicinity of the origin for $\tau$ and $|a|$ sufficiently small,  
and determine the modulational instability versus spectral stability
of small-amplitude, periodic traveling waves of \eqref{E:whitham}. 
Specifically we shall calculate
\begin{equation}\label{def:B}
\mathbf{B}_{\tau,a}=\left( \frac{\l \phi_j, \L_{\tau,a}\phi_k\r}{\l \phi_j, \phi_j\r}\right)_{j,k=1,2,3}
\end{equation}
and 
\begin{equation}\label{def:I}
\mathbf{I}_{a}=\left( \frac{\l \phi_j, \phi_k\r}{\l \phi_j, \phi_j\r}\right)_{j,k=1,2,3}
\end{equation}
up to the quadratic order in $\tau$ and the linear order in $a$ as $\tau, |a| \to 0$, 
where $\phi_1, \phi_2, \phi_3$ are defined in \eqref{E:phi1}-\eqref{E:phi3} 
and form a basis of the spectral subspace associated with $\sigma_1(\L_{0,a})$,
and hence also form a ($\tau$-independent) basis of the spectral subspace associated with1 $\sigma_1(\L_{\tau,a})$ for $\tau$ sufficiently small.
%\footnote{In particular, it follows by construction that $\mathbf{I}_a$ is $\tau$-independent, hence the notation.}. 
Recall that $\l\,,\r$ denotes the (normalized) $L^2_{2\pi}$-inner product (see \eqref{def:i-product}). 

Note that $\mathbf{B}_{\tau,a}$ and $\mathbf{I}_{a}$ represent, respectively, 
the actions of $\L_{\tau,a}$ and the identity 
on the spectral subspace associated to $\sigma_1(\L_{\tau,a})$. 
In particular, the three eigenvalues in $\sigma_1(\L_{\tau,a})$ agree in location and multiplicity
with the roots of the characteristic equation $\det(\mathbf{B}_{\tau,a}-\mu\mathbf{I}_{a})=0$;
see \cite[Section~4.3.5]{K}, for instance, for the detail.
Consequently the underlying, periodic traveling wave is modulationally unstable 
if the cubic characteristic polynomial $\det(\mathbf{B}_{\tau,a}-\mu\mathbf{I}_{a})$
admits a root with non-zero real part and it is spectrally stable otherwise.

\

For $|a|$ sufficiently small, a Baker-Campbell-Hausdorff expansion reveals that  
\begin{equation}\label{E:Lta}
\L_{\tau,a}=L_{0,a}+i\tau L_{1,a}-\frac12\tau^2L_{2,a}+O(\tau^3)
\end{equation}
as $\tau \to 0$, where 
\begin{align}
L_{0,a}=\L_{0,a}=\;&\partial_z(-\M_\k+\alpha(\k))-2a\partial_z(\cos z)+O(a^2) \notag \\
=:&M_0-2a\partial_z(\cos z)+O(a^2), \label{def:L0} \\
L_{1,a}=[L_{0,a},z]=\;&[-\partial_z\M_k,z]+\alpha(\k)-2a\cos z+O(a^2) \notag \\
=:&M_1-2a\cos z+O(a^2), \label{def:L1} \\
L_{2,a}=[L_{1,a},z]=\;&[[-\partial_z\M_k,z],z]+O(a^2)=:M_2+O(a^2) \label{def:L2}
\end{align}
as $|a| \to 0$. 
Note that $L_{1,a}$ and $L_{2,a}$ are well-defined in $L^2_{2\pi}$ even though $z$ is not. 
For instance,
\[
[\M_\k,z]\partial_z e^{inz}=ne^{inz}\sum_{m\neq 0} \frac{(-1)^{|m|}}{m}
(-\alpha(\k n)-\alpha(\k n+\k m))e^{imz} \quad \text{for }n\in \mathbb{Z}
\]
and $M_1=-[\M_k,z]\partial_z-\M_k$.

\

The Fourier series representations of $M_1$ and $M_2$ seem unwieldy. 
We instead use $\L_{\tau,0}e^{inz}$, $|n|\leq 2$, (see \eqref{E:L(a=0)} and \eqref{def:omega})
to calculate $M_j\phi_k$, $j=0,1,2$ and $k=1,2,3$, and hence $\L_{\tau,a}\phi_k$, $k=1,2,3$.

\

A Taylor expansion of $\omega_{0,\tau}$ reveals that 
\[
\L_{\tau,0}1=i\omega_{0,\tau}=i\tau(\alpha(\k)-1)+O(\tau^3)
\]
as $\tau\to 0$, whence
\begin{equation}\label{E:M1}
M_0(1)=0, \qquad M_1(1)=\alpha(\k)-1,\qquad M_2(1)=0.
\end{equation}
Similarly,
\begin{align*}
\L_{\tau,0}(\cos z)=&\frac{i}{2}(\omega_{1,\tau}e^{iz}+\omega_{-1,\tau}e^{-iz}) \\
=&-i\tau\k\alpha'(\k)\cos z+\frac12\tau^2(2\k\alpha'(\k)+\k^2\alpha''(\k))\sin z+O(\tau^3)
\end{align*}
as $\tau \to 0$, whence 
\begin{equation}\label{E:Mcosz}
\begin{aligned}
M_0(\cos z)=&0,\\ M_1(\cos z)=&-\k\alpha'(\k)\cos z,\\
M_2(\cos z)=&-(2\k\alpha'(\k)+\k^2\alpha''(\k))\sin z.
\end{aligned}
\end{equation}
Moreover $\L_{\tau,0}(\sin z)=\frac12(\omega_{1,\tau}e^{iz}-\omega_{-1,\tau}e^{-iz})$ and 
\begin{equation}\label{E:Msinz}
\begin{aligned}
M_0(\sin z)=&0,\\ M_1(\sin z)=&-\k\alpha'(\k)\sin z,\\
M_2(\sin z)=&(2\k\alpha'(\k)+\k^2\alpha''(\k))\cos z.
\end{aligned}\end{equation}
Continuing, 
\begin{align*}
\L_{\tau,0}(\cos2z)=&\frac{i}{2}(\omega_{2,\tau}e^{2iz}+\omega_{-2,\tau}e^{-2iz}) \\
=&-2(\alpha(\k)-\alpha(2\k))\sin2z
+i\tau(\alpha(\k)-\alpha(2\k)-2\k\alpha'(\k))\cos2z \\
&+\tau^2(\k\alpha'(2\k)+\tau^2\alpha''(2\k))\sin2z+O(\tau^3)
\end{align*}
as $\tau \to 0$, whence
\begin{equation}\label{E:Mcos2z}
\begin{aligned}
M_0(\cos2z)=&-2(\alpha(\k)-\alpha(2\k))\sin2z, \\
M_1(\cos2z)=&(\alpha(\k)-\alpha(2\k)-2\k\alpha'(\k))\cos2z, \\
M_2(\cos2z)=&-2(\k\alpha'(2\k)+\tau^2\alpha''(2\k))\sin2z.
\end{aligned}\end{equation}
Moreover $\L_{\tau,0}(\sin2z)=\frac{1}{2}(\omega_{2,\tau}e^{2iz}-\omega_{-2,\tau}e^{-2iz})$ and
\begin{equation}\label{E:Msin2z}
\begin{aligned}
M_0(\sin2z)=&2(\alpha(\k)-\alpha(2\k))\cos2z, \\
M_1(\sin2z)=&(\alpha(\k)-\alpha(2\k)-2\k\alpha'(\k))\sin2z, \\
M_2(\sin2z)=&2(\k\alpha'(2\k)+\tau^2\alpha''(2\k))\cos2z.
\end{aligned}\end{equation}

\

To proceed, we substitute \eqref{E:M1}-\eqref{E:Msin2z} into \eqref{E:Lta}-\eqref{def:L2} to obtain that
\begin{align*}
\L_{\tau,a}(1)=&2a\sin z+i\tau(\alpha(\k)-1)-2i\tau a\cos z+O(\tau^3+a^2),\\
\L_{\tau,a}(\cos z)=&2a\sin2z-i\tau\k\alpha'(\k)\cos z-i\tau a(1+\cos2z) \\
&+\tau^2(2\k\alpha'(\k)+\k^2\alpha''(\k))\sin z+O(\tau^3+a^2), \\
\L_{\tau,a}(\sin z)=&-2a\cos2z-i\tau\k\alpha'(\k)\sin z-i\tau a\sin2z \\
&-\tau^2(2\k\alpha'(\k)+\k^2\alpha''(\k))\cos z+O(\tau^3+a^2)
\intertext{as $\tau, |a| \to 0$. Moreover}
\L_{\tau,a}(\cos2z)=&-2(\alpha(\k)-\alpha(2\k))\sin2z+a(\sin z+3\sin3z) \\
&+i\tau(\alpha(\k)-\alpha(2\k)-2\k\alpha'(\k))\cos2z-i\tau a(\cos z+\cos3z) \\
&+\tau^2(\k\alpha'(2\k)+\tau^2\alpha''(2\k))\sin2z+O(\tau^3+a^2)
\intertext{and}
\L_{\tau,a}(\sin2z)=&2(\alpha(\k)-\alpha(2\k))\cos2z-a(\cos z+3\cos3z) \\
&+i\tau(\alpha(\k)-\alpha(2\k)-2\k\alpha'(\k))\sin2z-i\tau a(\sin z+\sin3z) \\
&-\tau^2(\k\alpha'(2\k)+\tau^2\alpha''(2\k))\cos2z+O(\tau^3+a^2)
\end{align*}
as $\tau, |a| \to 0$.

We therefore use \eqref{E:phi1}-\eqref{E:phi3} to calculate that 
\begin{align*}
\langle \phi_1, \L_{\tau,a}\phi_1\rangle=&
\Big\langle \cos z+a\frac{-1/2+\cos2z}{\alpha(\k)-\alpha(2\k)}, 
\L_{\tau,a}\Big(\cos z+a\frac{-1/2+\cos2z}{\alpha(\k)-\alpha(2\k)}\Big)\Big\rangle \\
=&-\frac{i}{2}\tau\k\alpha'(\k)+O(\tau^3+a^2)\\
\intertext{and}
\langle \phi_1, \L_{\tau,a}\phi_2\rangle=&-\langle \phi_2, \L_{\tau,a}\phi_1\rangle
=-\frac12\tau^2\Big(\k\alpha'(\k)+\frac12\k^2\alpha''(\k)\Big)+O(\tau^3+a^2),\\
\langle \phi_1, \L_{\tau,a}\phi_3\rangle=&-\langle \phi_3, \L_{\tau,a}\phi_1\rangle
=-\frac{i}{2}\tau a\Big(2+\frac{\alpha(\k)-1}{\alpha(\k)-\alpha(2\k)}\Big)+O(\tau^3+a^2)
\end{align*}
as $\tau, |a|\to 0$. Moreover
\begin{align*}
\langle \phi_2, \L_{\tau,a}\phi_2\rangle=&-\frac{i}{2}\tau\k\alpha'(\k)+O(\tau^3+a^2), \\ 
\langle \phi_2, \L_{\tau,a}\phi_3\rangle=&a+O(\tau^3+a^2), \\
\langle \phi_3, \L_{\tau,a}\phi_2\rangle=&O(\tau^3+a^2),\\
\langle \phi_3, \L_{\tau,a}\phi_3\rangle=&i\tau(\alpha(\k)-1)+O(\tau^3+a^2)
\end{align*}
as $\tau, |a| \to 0$. 

\

A straightforward calculation, furthermore, reveals that
\[
\langle \phi_1,\phi_1\rangle=\langle \phi_2,\phi_2\rangle=\frac12+O(a^2), 
\qquad \langle \phi_3,\phi_3\rangle=1,
\]
and
\[
\langle \phi_1,\phi_3\rangle=-\frac12\frac{a}{\alpha(\k)-\alpha(2\k)}+O(a^2)
\]
as $|a| \to 0$; $\langle\phi_j, \phi_k\rangle=0$ otherwise by a parity argument.

\

To recapitulate (see \eqref{def:B} and \eqref{def:I}),
\begin{equation}\label{E:B}
\begin{aligned}
\mathbf{B}_{\tau,a}=&a\left(\begin{matrix} 0 & 0 & 0 \\ 0& 0 & 2 \\ 0 & 0 & 0 \end{matrix}\right)
+i\tau\left(\begin{matrix} 
-\k\alpha'(\k) & 0 & 0 \\ 0& -\k\alpha'(\k) & 0 \\ 0 & 0 & \alpha(\k)-1 \end{matrix}\right) \\
&-i\tau a\Big(1+\frac12\frac{\alpha(\k)-1}{\alpha(\k)-\alpha(2\k)}\Big)
\left(\begin{matrix} 0 & 0 & 2 \\ 0& 0 & 0 \\ 1 & 0 & 0 \end{matrix}\right) \\
&+\tau^2\Big(\k\alpha'(\k)+\frac12\k^2\alpha''(\k)\Big)
\left(\begin{matrix} 0 & -1 & 0 \\ 1 & 0 & 0 \\ 0 & 0 & 0 \end{matrix}\right)+O(\tau^3+a^2)
\end{aligned}\end{equation}
and 
\begin{equation}\label{E:I}
\mathbf{I}_{a}=\mathbf{I}-\frac{a}{\alpha(\k)-\alpha(2\k)}
\left(\begin{matrix} 0 & 0 & 1 \\ 0& 0 & 0 \\ 1/2 & 0 & 0 \end{matrix}\right)+O(a^2)
\end{equation}
as $|a|\to 0$. Note that 
\[
\mathbf{B}_{0,a}=a\left(\begin{matrix} 0 & 0 & 0 \\ 0 & 0 & 2 \\ 0 & 0 & 0\end{matrix}\right)+O(a^2)
\]
as $|a| \to 0$. Therefore the underlying, periodic traveling wave 
is spectrally stable to same period perturbations. Note moreover that 
\begin{equation}\label{E:B'}
\mathbf{B}_{\tau,0}=\left(\begin{matrix}
\frac{i}{2}(\omega_{1,\tau}+\omega_{-1,\tau}) & \frac{1}{2}(\omega_{1,\tau}-\omega_{-1,\tau}) & 0 \\
-\frac{1}{2}(\omega_{1,\tau}-\omega_{-1,\tau}) & \frac{i}{2}(\omega_{1,\tau}+\omega_{-1,\tau}) & 0 \\
0 & 0 & i\omega_{0,\tau}\end{matrix}\right)
\end{equation}
for all $\tau \in [0,1/2]$ and $\mathbf{I}_{0}=\mathbf{I}$. 

%%%%%%%%%%%%%%%%%%%%%%%%%%%%%%%%%%%%%%%%%%%%%%%%%
%%%%%%%%%%%%%%%%%%%%%%%%%%%%%%%%%%%%%%%%%%%%%%%%%
\subsection{Proof of the main results}\label{sec:index}
%%%%%%%%%%%%%%%%%%%%%%%%%%%%%%%%%%%%%%%%%%%%%%%%%
%%%%%%%%%%%%%%%%%%%%%%%%%%%%%%%%%%%%%%%%%%%%%%%%%

We shall examine the roots of the characteristic polynomial 
\begin{equation}\label{def:P}
P(\mu;\tau,a)=\det({\bf B}_{\tau,a}-\mu{\bf I}_{a})
=c_3(\tau,a)\mu^3+ic_2(\tau,a)\mu^2+c_1(\tau,a)\mu+ic_0(\tau,a)
\end{equation}
for $\tau$ and $|a|$ sufficiently small
and derive a modulational instability index for \eqref{E:whitham}. 
As discussed above, these roots coincide with the eigenvalues of \eqref{E:Ltau} bifurcating from zero for $\tau$ and $|a|$ sufficiently small.

\

Note that $c_j$'s, $j=0,1,2,3$, depend smoothly upon $\tau$ and $a$ for $\tau, |a|$ sufficiently small. 
Since $\sigma(\L_{\tau,a})$ is symmetric about the imaginary axis, moreover, $c_j$'s are real. 
Furthermore \eqref{E:sym} implies that $c_3, c_1$ are even functions of $\tau$
while $c_2, c_0$ are odd and that $c_j$'s, $j=0,1,2,3$, are even in $a$.
Clearly, $\mu=0$ is a root of $P(\cdot\,;0,a)$ with multiplicity three
and $\tau=0$ is a root of $P(0;\cdot, a)$ with multiplicity three for $|a|$ sufficiently small. 
Therefore $c_j(\tau,a)=O(\tau^{3-j})$ as $\tau\to 0$, $j=0,1,2,3$, for $|a|$ sufficiently small.
Consequently we may write that 
\begin{equation}\label{def:dj}
c_j(\tau,a)=d_j(\tau,a)\tau^{3-j}, \qquad j=0,1,2,3,
\end{equation}
where $d_j$'s depend smoothly upon $\tau$ and $a$, and they are real and even in $a$ 
for $\tau, |a|$ sufficiently small. We may further write that
\begin{align*}
P(-i\tau\lambda;\tau,a)=i\tau^3(d_3(\tau,a)\lambda^3
-d_2(\tau,a)\lambda^2-d_1(\tau,a)\lambda+d_0(\tau,a)).
\end{align*}
The underlying, periodic traveling wave is then modulationally unstable
if the characteristic polynomial $P(-i\tau\cdot\,;\tau, a)$ admits a pair of complex roots, i.e., 
\begin{equation}\label{def:Delta}
\Delta_{\tau,a}:=\left(18d_3d_2d_1d_0+d_2^2d_1^2+4d_2^3d_0+4d_3d_1^3-27d_3^2d_0^2\right)(\tau,a)<0
\end{equation}
for $\tau$ and $|a|$ sufficiently small, 
where $d_j$'s, $j=0,1,2,3$, are in \eqref{def:P} and \eqref{def:dj},
while it is spectral stable near the origin
if the polynomial admits three real roots, i.e., if $\Delta_{\tau,a}>0$. 

\

Since $d_j$'s, $j=0,1,2,3$, are even in $a$, 
we may expand the discriminant in \eqref{def:Delta} and write that 
\[
\Delta_{\tau,a}=\Delta_{\tau,0}+\Lambda(\k) a^2 +O(a^2(a^2+\tau^2))
\]
as $\tau, |a| \to 0$. 
Since the zero solution of \eqref{E:whitham} is spectrally stable (see Section~\ref{sec:spec}),
furthermore, $\Delta_{\tau,0}\geq 0$ for all $\tau \in [0,1/2]$. 
Indeed we use \eqref{E:B} to calculate that $\Delta_{0,0}=0$ and that
\[
\Delta_{\tau,0}=\frac{\left(\omega_{0,\tau}-\omega_{1,\tau}\right)^2\left(\omega_{0,\tau}-\omega_{-1,\tau}\right)^2\left(\omega_{1,\tau}-\omega_{-1,\tau}\right)^2}{\tau^6}>0
\]
for all $\tau \in (0,1/2]$. %Note that 
%\[\Delta_{\tau,0}=-\tau^2\left(2\k\alpha'(\k)+\k^2\alpha''(\k)\right)^2\left(\k\alpha'(\k)+\alpha(\k)-1\right)^4+O(\tau^4)\]as $\tau \to 0$. 
Therefore the sign of $\Lambda(\k)$ decides 
the modulational instability versus spectral stability near the origin of the underlying wave. 
As a matter of fact, if $\Lambda(\k)<0$ then $\Delta_{\tau,a}<0$ for $\tau$ sufficiently small 
for $a>0$ sufficiently small but fixed, implying modulational instability, whereas 
if $\Lambda(\k)>0$ then $\Delta_{\tau,a}>0$ for all $\tau$ and $|a|$ sufficiently small,
implying spectral stability. 
We then use Mathematica and calculate \eqref{E:B} and \eqref{E:I} to ultimately find that
\begin{equation}\label{def:Lambda}
\Lambda(\k)=\frac{2(2\k\alpha'(\k)+\k^2\alpha''(\k))(\alpha(\k)-1+\k\alpha'(\k))^3(3\alpha(\k)-2\alpha(2\k)-1+\k\alpha'(\k))}{\alpha(\k)-\alpha(2\k)}.
\end{equation}

\

A straight forward calculation reveals that
$\k\mapsto\k(\alpha(\k)-1)$ is strictly decreasing and concave down over the interval $(0,\infty)$, 
whence
\[
(\k(\alpha(\k)-1))' = \alpha(\k)-1+\k\alpha'(\k)<0
\]
and
\[
(\k(\alpha(\k)-1))''= 2\alpha'(\k)+\k\alpha''(\k)<0
\]
for $0<\k<\infty$.  Therefore the sign of $\Lambda(\k)$ agrees with that of
\begin{align*}
\Gamma(\k):=&3\alpha(\k)-2\alpha(2\k)-1+\k\alpha'(\k) \\
=&2(\alpha(\k)-\alpha(2\k))+\frac{d}{d\k}(\k(\alpha(\k)-1)).
\end{align*}
Furthermore, the un-normalization of \eqref{E:whitham} (see \eqref{E:normalization})
merely changes $\k$ to $d\k$ above, and has no effects regarding stability upon the scaling of $t$ and $u$. 
This completes the proof of Theorem \ref{thm:main}.

\

To proceed, a straightforward calculation reveals that 
\begin{equation}\label{E:LambdaLimit}
\lim_{\k\to 0^+}\frac{\Gamma(\k)}{\k^2}=\frac{1}{2}\quad\textrm{and}\quad
\lim_{\k\to\infty}\Gamma(\k)=-1,
\end{equation}
whence the intermediate value theorem dictates at least one transversal root of $\Gamma$, 
corresponding to change in stability. 
Therefore a small-amplitude, $2\pi/\k$-periodic traveling wave of \eqref{E:whitham} 
is modulationally unstable if $\k>0$ is sufficiently large, while 
it is spectrally stable if $\k>0$ is sufficiently small.

Furthermore a numerical evaluation reveals that $\Gamma(\k)$ takes at least one transversal root 
$\k_c\approx 1.146$ for $0<\k<\infty$ (see Figure~\ref{F:GammaFig}) 
and $\Gamma(\k)<0$ for $\k>\k_c$ (see Figure~\ref{F:GammaInvertFig}).
Consequently $\Gamma(\k)$ takes a unique root $\k_c$ for $0<\k<\infty$, 
corresponding to exactly one switch in stability.
The un-normalization (see \eqref{E:normalization}) then completes the proof of Corollary~\ref{cor:main}.

%%%%%%%%%%%%%%%%%%%%%%%%%%%%%%%%%%%%%%%%%%%%%%%%%
%%%%%%%%%%%%%%%%%%%%%%%%%%%%%%%%%%%%%%%%%%%%%%%%%
%%%%%%%%%%%%%%%%%%%%%%%%%%%%%%%%%%%%%%%%%%%%%%%%%
\section{Extensions}\label{sec:extension}
%%%%%%%%%%%%%%%%%%%%%%%%%%%%%%%%%%%%%%%%%%%%%%%%%
%%%%%%%%%%%%%%%%%%%%%%%%%%%%%%%%%%%%%%%%%%%%%%%%%
%%%%%%%%%%%%%%%%%%%%%%%%%%%%%%%%%%%%%%%%%%%%%%%%%

The results in Section~\ref{sec:existence} and Section~\ref{sec:stability} 
are readily adapted to related, nonlinear dispersive equations. 
We shall illustrate this by deriving a modulational instability index 
for small-amplitude, periodic traveling waves of equations of Whitham type 
\begin{equation}\label{E:equation}
\partial_tu+\mathcal{M}\partial_xu+\partial_x(u^2)=0.
\end{equation}
Here $\mathcal{M}$ is a Fourier multiplier defined, abusing notation, as
$\widehat{\mathcal{M}f}(\xi)=\alpha(\xi)\hat{f}(\xi)$, allowing for nonlocal dispersion.
We assume that $\alpha$ is even and strictly decreasing\footnote{
More precisely, we assume that $\alpha(\k)-\alpha(n\k)=0$ has no solutions 
for $\k>0$ for $n=2,3,\dots$. Otherwise, a resonance occurs between 
the first and higher harmonics and our theory does not apply.}
%Technically, all that is required for the existence and stability theory is that, for each fixed $n=2,3,\ldots$, the equation $\alpha(\k)-\alpha(n\k)=0$ has no solutions $\k>0$.  Such a situation would cause a resonance effect between the first and higher harmonics, a situation that does not fit into the analysis presented here.} 
over the interval $(0,\infty)$, and satisfies $\alpha(0)=1$. Clearly \eqref{def:M} fits into the framework. 

\

Following the arguments in Section~\ref{sec:existence} 
we obtain for each $\k>0$ a smooth, two-parameter family of 
small-amplitude, $2\pi/\k$-periodic traveling waves $w=w(\k,a,b)$ of \eqref{E:equation},
propagating at the speed $c=c(\k,a,b)$, where $|a|$ and $|b|$ are sufficiently small. 
Furthermore $w$ and $c$ satisfy, respectively, \eqref{E:w(k,a,b)} and \eqref{E:c(k,a,b)}. 

Following the arguments in Section~\ref{sec:stability}
we then derive the modulational instability index $\Lambda(\k)$ in \eqref{def:Lambda}
deciding the modulational instability versus spectral stability
of small-amplitude, periodic traveling waves of \eqref{E:equation} near the origin. 
We merely pause to remark that the derivation of the index does not use 
specific properties of the dispersion symbol. 
%See \cite{DT}, for instance, for analysis of the spectrum away from the origin.

\

For example, consider the KdV equation with fractional dispersion (fKdV)
\begin{equation}\label{E:fKdV}
\partial_tu+\partial_x u-(\sqrt{-\partial_x^2})^\sigma \partial_xu+\partial_x(u^2)=0,\quad \sigma>1/2,
\end{equation}
for which $\alpha(\xi;\sigma)=1-|\xi|^\sigma$. 
In the case of $\sigma=2$, notably, \eqref{E:fKdV} recovers the KdV equation 
while in the case of $\sigma=1$ it is the Benjamin-Ono equation. 
In the case of $\sigma=-1/2$, moreover, \eqref{E:fKdV} was argued in \cite{Hur-breaking}
to approximate up to quadratic order the surface water wave problem in two dimensions
in the infinite depth case. Note that \eqref{E:fKdV} is nonlocal for $\sigma<2$. 
Note moreover that $\alpha$ is homogeneous. 
Hence one may take without loss of generality that $\k=1$, and
the modulational instability index depends merely upon the dispersion exponent $\sigma$. 
As a matter of fact, we substitute the dispersion relation into \eqref{def:Lambda} and calculate that
\begin{equation}\label{def:LKdV}
\Lambda_{fKdV}(\k;\sigma)=\frac{2\k^{4\sigma}\sigma(1+\sigma)^4(2^{\sigma+1}-3-\sigma)}{2^\sigma-1}.
\end{equation}
The modulational instability index $\Lambda_{fKdV}$ is negative (for all $\k>0$) if $\sigma<1$, 
implying modulational instability, 
whereas it is positive if $\sigma>1$, implying spectral stability to square integrable perturbations.
The results reproduce those in \cite{J2013}, 
which addresses a general, power-law nonlinearity. 
Observe that the index \eqref{def:LKdV} vanishes if $\sigma=1$, which is inconclusive. 
It was shown in \cite{BHV}, on the other hand, that all periodic traveling waves 
of the Benjamin-Ono equation are spectrally stable near the origin.

\

Another interesting example is the intermediate long-wave (ILW) equation
\begin{equation}\label{E:ILW}
\partial_tu+\partial_xu+(1/H)\partial_xu-\mathcal{N}_H\partial_xu+\partial_x(u^2)=0,
\end{equation}
which was introduced in \cite{Joseph} to model nonlinear dispersive waves 
at the interface between two fluids of different densities, contained at rest in a channel,
where the lighter fluid is above the heavier fluid;
$H>0$ is a constant and $\mathcal{N}_H$ is a Fourier multiplier, defined as
\[
\widehat{\mathcal{N}_H f}(\xi)=\xi\coth(\xi H)\widehat{f}(\xi).
\]
Incidentally $\mathcal{N}_H$ is the Dirichlet-to-Neumann operator for a strip of size $H$
with a Dirichlet boundary condition at the other boundary.
Clearly \eqref{E:ILW} is of the form \eqref{E:equation}, for which $\alpha(\k;H)=1+1/H-\k\coth(\k H)$.
We may substitute into \eqref{def:Lambda} and calculate that 
\begin{multline}\label{def:LILW}
\Lambda_{ILW}(\k;H)=\frac{((-1+4H^2\k^2)\cosh(H\k)+\cosh(3H\k)-8H\k\sinh(H\k))^2}
{32H^4\sinh(H\k)^{12}}\\
\times(1-2H^2\k^2-\cosh(2H\k)+2H\k\sinh(2H\k))^3.
\end{multline}
Note that the sign of $\Lambda_{ILW}(\k;H)$ is the same as that of 
\[
\Gamma_{ILW}(z)=1-2z^2-\cosh(2z)+2z\sinh(2z)
\]
evaluated at $z=\k H$.
An examination of the graph of $\Gamma_{ILW}$ furthermore
indicates that it is positive over the interval $(0,\infty)$. 
We therefore conclude that for each $H>0$ small-amplitude, periodic traveling waves of \eqref{E:ILW} 
are spectrally stable to square integrable perturbations near the origin, 
rigorously justifying a formal calculation in \cite{Pel}, for instance, using amplitude equations.
%Note that in \cite{Pel}, formal computations using amplitude equations have previously produced the modulational stability of small amplitude periodic solutions of the intermediate long-wave equation\footnote{These results, although highly suggestive, are mathematically formal since they do not provide a justification of the amplitude equations at the level of dynamics.  Such justifications are possible, however, and have been carried out in a number of cases; for examples, see \cite{DSSS} and \cite{JNRZ} and references therein.}. See also \cite{PG} for generalizations of these amplitude equation methods to the basic hydrodynamic equations.  
To the best of the authors' knowledge, this is the first rigorous result
about the modulational stability and instability for the intermediate long-wave equation.
%utilizing rigorous spectral perturbation theory,

Note moreover that $\Lambda_{ILW}(\k;H)\to 0$ as $\k H\to\infty$ for any $\k>0$,
while \eqref{E:ILW} recovers the Benjamin-Ono equation as $H \to \infty$, 
corresponding to \eqref{E:fKdV} with $\sigma=1$.
As a matter of fact, \eqref{E:ILW} bridges the KdV equation ($H\to 0$) 
and the Benjamin-Ono equation ($H\to \infty$).
Incidentally $\Lambda_{fKdV}(\k,\sigma)$ in \eqref{def:LKdV} vanishes identically in $\k$ at $\sigma=1$. 

%%%%%%%%%%%%%%%%%%%%%%%%%%%%%%%%%%%%%%%%%%%%%%%%%
%%%%%%%%%%%%%%%%%%%%%%%%%%%%%%%%%%%%%%%%%%%%%%%%%
\subsection*{Acknowledgements}
The authors thank the referees for their careful reading of the manuscript 
and for their many useful suggestions and references.  
VMH is supported by the National Science Foundation under grant DMS-1008885 
and by an Alfred P. Sloan Foundation fellowship. 
MJ gratefully acknowledges support from the National Science Foundation under grant 
DMS-1211183.

\bibliographystyle{amsalpha}
\bibliography{stabilityBib}

\end{document}